\documentclass[11pt]{article}

\usepackage{fullpage}

\usepackage{amsmath,amssymb,amsthm}
\usepackage{graphicx}
\usepackage[mathscr]{eucal}
\usepackage{pictexwd,dcpic}
\usepackage{stmaryrd}
\usepackage{enumerate}

\usepackage[pagewise]{lineno}
\usepackage{xcolor}

\usepackage{hyperref} 
\hypersetup{
    colorlinks,
    citecolor=red,
    filecolor=red,
    linkcolor=blue,
    urlcolor=red
}

\input xy
\xyoption{all}
\xyoption{2cell}
\UseAllTwocells
\input{diagxy}

\newtheorem{thm}{Theorem}[section]
\newtheorem{lem}[thm]{Lemma}
\newtheorem{prop}[thm]{Proposition}

\newtheorem{defn}[thm]{Definition}
\newtheorem{ex}[thm]{Example}
\newtheorem{rem}[thm]{Remark}

\def\tov{\nrightarrow}

\def\ot{\otimes}
\def\op{\oplus}
\def\rarr{\rightarrow}
\def\lrarr{\longrightarrow}
\def\rTo{\mbox{$ -\!\!\!\!\!-\!\!\circ$}}
\def\lTo{\mbox{$\circ\!\!-\!\!\!\!\!-$}}
\def\qzero{\mbox{\bf{0}}}
\def\qone{\mbox{\bf{1}}}
\renewcommand{\max}{\mbox{\textsf{max}}}
\renewcommand{\min}{\mbox{\textsf{min}}}
\newcommand{\ttop}{\mbox{$\top\!\!\!\top$}}
\newcommand{\bott}{\mbox{$\bot\!\!\!\bot$}}

\newcommand{\ZZP}{\mbox{$\mathbb{Z}_\infty$}}
\newcommand{\sREL}{\mbox{$\sf Rel$}}
\newcommand{\sORD}{\mbox{$\sf Ord$}}

\def\Mon{\mbox{${\sf Mon}$}}
\def\Matr{\mbox{${\sf Matr}$}}
\def\Mod{\mbox{${\sf Mod}$}}

\newcommand{\QRel}{\mbox{$Q$-${\sf Rel}$}}
\newcommand{\QMod}{\mbox{$\cQ$-${\sf Mod}$}}
\newcommand{\MonQ}{\mbox{${\sf Mon}\cQ$}}
\newcommand{\MatrQ}{\mbox{${\sf Matr}\cQ$}}

\newcommand{\BMod}{\mbox{$\cB$-${\sf Mod}$}}
\newcommand{\MonB}{\mbox{${\sf Mon}\cB$}}
\newcommand{\MatrB}{\mbox{${\sf Matr}\cB$}}

\newcommand{\LD}{{\sf LD}}

\def\cB{\mbox{$\mathcal{B}$}}
\def\cC{\mbox{$\mathcal{C}$}}
\def\cD{\mbox{$\mathcal{D}$}}
\def\cL{\mbox{$\mathcal{L}$}}
\def\cQ{\mbox{$\mathcal{Q}$}}
\def\cM{\mbox{$\mathcal{M}$}}
\def\cV{\mbox{$\mathcal{V}$}}

\def\ZZ{\mathbb{Z}}
\def\biquant{\mathcal{Q}\mathrm{uant}}
\def\biloc{\mathcal{L}\mathrm{oc}}
\def\qtld{\mathcal{Q}\mathrm{tld}}
\def\vmat{\mathcal{V}$-$\mathcal{M}\mathrm{at}}

\def\prof{\mbox{$\mathcal{P}\mathrm{rof}$}}

\newcommand{\Ladj}{\mbox{\small $\mathrel{\relbar\joinrel\parallel}$}}
\newcommand{\CLadj}{\mbox{\small $\mathrel{\parallel\joinrel\shortrightarrow\mkern-5mu\joinrel\relbar\joinrel\parallel}$}}
\newcommand{\coproduct}{\rotatebox[origin=c]{180}{$\prod$}}

\title{Constructing linear bicategories}
\author{\begin{tabular}[t]{c}
        Richard Blute\thanks{Research supported in part by
NSERC.} \,\,\,\,\,\,\,\,\,\,\,\,\,\,\,\,\,\,\,\,\,\,\,\,\,\,\,\,\,   Rose Kudzman-Blais*\\
       {\small Department of Mathematics and Statistics}\\ [-4pt]
        {\small University of Ottawa}\\  [-4pt]
        {\small Ottawa, Ontario, Canada}\\ [-4pt]
        \\
Susan Niefield\\
        {\small Department of Mathematics}\\ [-4pt]
        {\small Union College}\\  [-4pt]
        {\small Schenectady, NY, USA}\\ [-4pt]
        \end{tabular}}

\begin{document}

\maketitle

\begin{center}{\em Dedicated to the memory of our friend Phil Scott}\end{center}

\bigskip

\begin{abstract} {\it Linearly distributive categories} were introduced to model the tensor/par fragment of linear logic, without resorting to the use of negation. {\it Linear bicategories} are the bicategorical version of linearly distributive categories. Essentially, a linear bicategory has two forms of composition, each determining the structure of a bicategory, and the two compositions are related by a linear distribution. After the initial paper on the subject, there was little further work as there seemed to be a lack of examples. The main goal of this paper is to demonstrate that there are in fact a great many examples, which are obtained by considering quantales and quantaloids, and by extending familiar constructions from the (ordinary) bicategorical setting. It is standard in the field of {\it monoidal topology} that the category of quantale-valued relations is a bicategory. Here we begin by showing that a quantale is {\it Girard} if and only if the corresponding bicategory is a Girard quantaloid, which is an example of linear bicategory. The {\it tropical} and {\it arctic semiring} structures fit together into a Girard quantale, so this construction is likely to have multiple applications. More generally, we define \LD-quantales, which are sup-lattices with two quantale structures related by a linear distribution, and their bicategorical analogue, linear quantaloids. We show that \QRel\ is a linear quantaloid if and only if $Q$ is an \LD-quantale. We then consider several standard constructions from enriched bicategory theory, and show that these lift to the linear quantaloid setting and produce new examples of linear bicategories. In particular, we consider linear \cQ-categories, matrices in \cQ\ and linear monads in \cQ, where \cQ\ is a linear quantaloid. We develop non-locally posetal examples as well, $\biquant$, the bicategory of quantales, modules and module homomorphisms, and $\qtld$, the bicategory of quantaloids, modules and module homomorphisms. These turn out to be cyclic $*$-autonomous bicategories, which are in essence a closed version of linear bicategories. 
\end{abstract}

\newpage

\section{Introduction}

The idea behind the theory of linearly distributive categories (LDC) as introduced by \cite{Cockett_Seely_1997} is that (the multiplicative fragment of) linear logic is best modeled by taking the two multiplicative connectives, $\ot$ (tensor) and $\op$ (par), as primitive. One then obtains a category with two monoidal structures, related by {\it linear distributions}. A linear distribution is a natural transformation, not an isomorphism typically, of the following form or a symmetric equivalent:
\[A\ot (B\op C)\rarr (A\ot B)\op C\]
This approach to the model theory of linear logic differs from the original approach, using the $*$-autonomous categories of \cite{Barr_1979}, of taking tensor and negation as primitive and then defining the par by de Morgan duality.\\

Just as monoidal categories can be viewed as one-object bicategories, one can ask for the bicategorical version of linearly distributive categories. These are the {\it linear bicategories} of \cite{Cockett_Koslowski_Seely_2000}, which provide a natural semantics for non-commutative linear logic. The primary goal of this paper is to give new classes of linear bicategories arising from several different sources. \\ 

Before diving into the formal definitions, we describe the example that led to the more general discussion below. The structures we consider are two ordered semiring structures on the extended integers $\ZZP=\ZZ\cup\{+\infty,-\infty\}$, as examined by \cite{Golan_2003}. (One could just as well consider the extended reals.) This set in fact has two semiring structures and these are typically called the {\it tropical} and {\it arctic} semirings. They are of great use in the theory of synchronization as considered in \cite{Baccelli_Cohen_Olsder_Quadrat_1992}. See \cite{Droste_Kuich_2009, Droste_Kuich_Vogler_2009} for how extensively these structures arise. \\

In both, multiplication is given by the usual addition of integers. But we must be careful in defining $\infty+-\infty$.  In the first structure, we define $-\infty+_1\infty=-\infty=\infty+_1-\infty$. The addition for this structure is given by \max. This gives \ZZP\ the structure of an ordered semiring with \ZZP\ equipped with its usual order.\\

For the second structure, we again have that the multiplication is given by addition. But analogously we now must have $-\infty+_2\infty=\infty=\infty+_2-\infty$. The addition for this structure is given by \min. This gives \ZZP\ the structure of an ordered semiring with \ZZP\ equipped with the opposite of its usual order. \\

If $X$ and $Y$ are sets, we will define a \ZZP-{\it relation }from $X$ to $Y$ to be a function $R\colon X\times Y\rarr\ZZP$. As in the 
category of relations, we consider this as a morphism $R\colon X\tov Y$. Then the two semiring constructions above allow us to define two distinct relational compositions. \\

Explicitly, given $X\stackrel{A}{\tov}Y \stackrel{B}{\tov}Z$, we define
\[A\ot B(x,z)=\bigvee_{y\in Y}(A(x,y)+_1B(y,z)) \mbox{\,\,\,\,\,\,\,\,\,\,\,\,\,\,\,\,\,\,\,\,}A\op B(x,z)=\bigwedge_{y\in 
Y}(A(x,y)+_2B(y,z))\]

We are of course using the fact that both $\ZZP$ and $\mathbb{Z}_\infty^{op}$ are not just semirings with the above structures, but are in
fact {\it quantales}. We are thus following the program of {\it monoidal topology} as described in \cite{Hofmann_Seal_Tholen_2014}.  \\

The current project began with the observation that the two compositions described above are related by a {\it linear distribution} and in fact determine a locally posetal linear bicategory. On the other hand, we have the observation that \ZZP\ with the above operations is a Girard quantale, as investigated by \cite{Yetter_1990, Rosenthal_1990} and the two structures are related by the Girard duality. \\

We first introduce the quantale analogue of linearly distributive categories, and the quantaloidal analogue of linear bicategories, in Section \ref{LDquantales_linearquantaloids}, which we call {\em \LD-quantales} and {\em linear quantaloids} respectively. These definitions will underlie most of our main results. All Girard quantales are \LD-quantales and all Girard quantaloids are linear quantaloids. \\

Our first result in Section \ref{QRel} is that the category \QRel\ is a Girard quantaloid, defined by \cite{Rosenthal_1992}, and therefore a locally posetal linear bicategory, if and only if $Q$ is a Girard quantale. This extends naturally to the case where $Q$ is an arbitrary \LD-quantale. We then provide concrete examples of \QRel\ as a linear quantaloid. \\

In Section \ref{Enriched_linearquantaloid}, following the general theory of enriching in a bicategory and the work of \cite{Rosenthal_1992}, we introduce the quantaloid \QMod\ whose 0-cells are \cQ-categories, 1-cells are \cQ-modules and 2-cells are point-wise inequalities, where \cQ\ is a Girard quantaloid. \QMod\ is then itself a Girard quantaloid. This leads to considering enrichment in a linear quantaloid \cQ\ and the introduction of the bicategory \QMod\ of linear \cQ-categories and linear \cQ-modules. It is shown to be a linear quantaloid if and only if \cQ\ is itself linear. This result proceeds by first proving the corresponding theorems for linear monads in \cQ\ and matrices in \cQ. Given these new constructions, we provide more examples of locally posetal linear bicategories, using the linear quantaloids presented in the previous section. \\

We finally develop non-locally posetal examples as well. This is done in Section \ref{non-posetal}. We begin by considering $\biloc$, the bicategory whose objects are locales, 1-cells are bimodules and two-cells are bimodule homomorphisms, which we use to illustrate a more general notion. This turns out to be what \cite{Cockett_Koslowski_Seely_2000} refer to as cyclic $*$-autonomous bicategories, which are linear bicategories.  We show that a number of classic examples of bicategories fit into this framework. In particular, the bicategories of quantales and of quantaloids (with their respective modules) are linear bicategories. 
 
\begin{rem} 
There are unfortunate notational conflicts between linear logic notation and the usual notation for ordered structures, as well as within the linear logic community.  We now give our choice for notation for the remainder of the paper, chosen to be in line with the notation of \cite{Cockett_Seely_1997} and \cite{Cockett_Koslowski_Seely_2000}. 

\begin{itemize}
    \item For partially ordered sets, we will denote the top element by $\qone$ and the bottom element by $\qzero$, if they exist.
    \item We will denote quantales by $Q$ and quantaloids by \cQ. 
    \item Composition of arrows will be written with diagrammatic ordering. 
    \item In a monoidal category, we will use $\ot$ to denote the tensor product and $\top$ to denote the unit, including in the case of quantales, as opposed to $\&$ and $1$ used in \cite{Rosenthal_1990}. Moreover, we will use $\ot$ to denote composition in a bicategory and $\top_{X}$ or $\ttop_X$ to denote the identity 1-cell on $X$, in particular for quantaloids, as opposed to $\circ$ and $i_X$ used in \cite{Rosenthal_1996}.
    \item In a $*$-autonomous or linearly distributive category, there is a second monoidal structure, which we will denote by $\op$, rather than the $\bindnasrepma$ of \cite{Girard_1987}. This includes in the case of Girard quantales, as opposed to $\sqcup$ and $d$ in \cite{Rosenthal_1990} and \LD-quantales. The unit of this second monoidal product will be denoted by $\perp$. $\op$ will also denote the second composition in the context of linear bicategories, in particular in Girard or linear quantaloids, and $\bot_{X}$ or $\bott_{X}$ will be the identity 1-cell on $X$. Note that $\bot$, $\bot_X$ and $\bott_X$ will also be used to denoted cyclic dualizing elements and families in Girard quantales and quantaloids. 
\end{itemize}

\end{rem}

\section{Preliminaries}

\subsection{Quantales and quantaloids}

See \cite{Niefield_Rosenthal_1988,Rosenthal_1990,Rosenthal_1996} for a detailed discussion about quantales and quantaloids. 

\begin{defn} 
\begin{itemize}

\item A {\em quantale} (coined by \cite{Mulvey_1986}) is a partially ordered set $Q$ with all suprema and an associative multiplication $\ot\colon Q\times Q\rarr Q$ such that for all subsets $P\subseteq Q$ and all elements $a\in Q$, we have 
\[\big(\bigvee P\big)\ot a=\bigvee_{p\in P}p\ot a\mbox{\,\,\,\,\,\,\,\,\, and \,\,\,\,\,\,\,\,\,\,}a\ot\big(\bigvee
P\big)=\bigvee_{p\in P}a\ot p\]
Note that $Q$ necessarily satisfies $a\ot{\qzero=\qzero=\qzero}\ot a$.

\item Since the operations $(-)\ot a$ and $a\ot(-)$ preserve all sups, they have right adjoints for all $a\in Q$, known as the left and right {\em residuations}. We denote them by $(-)\lTo a$ and $a\rTo(-)$ respectively and they are defined by:
\[ c\,\lTo a=\bigvee \{ b\in Q \,\vert\, b\ot a\leq c\} \quad\quad{\rm and}\quad\quad a\rTo\,c=\bigvee \{ b\in Q \,\vert\, a\ot b\leq c\} \]

\item An element $\top\in Q$ is a {\em unit} if for all $a\in Q, \top\ot a = a\ot\top$, in which case $Q$ is called {\em unital}. \\

\noindent The following definitions and result are due to \cite{Yetter_1990}:

\item An element $\bot\in Q$ is a {\em cyclic dualizing element} if for all $a\in Q$, we have 
\[\bot\,\lTo a=a\rTo\,\bot \quad\quad{\rm and}\quad\quad (a\rTo\,\bot)\rTo\,\bot=a\]
A {\em Girard quantale} is a pair $(Q, \bot)$ where $Q$ is a quantale and $\bot$ is a chosen 
cyclic dualizing element. We denote $a\rTo\,\bot$ as $a^\perp$. 

\item If $Q$ is a Girard quantale, it has a second multiplication defined by the linear logic version of de Morgan duality:

\begin{lem}\label{girardquantalelem} 
Let $(Q,\bot)$ be a Girard quantale, then it is unital with $\top = \bot^\perp$ and the operation $(-)^\perp$ is a contravariant isomorphism. $Q^{op}$ is thus a unital quantale with multiplication $$a\op b=(b^\perp\ot a^\perp)^\perp$$ and unit $\bot$. Evidently this operation satisfies:
\[(\bigwedge P)\op a=\bigwedge_{p\in P} p\op a \mbox{\,\,\,\,\,\,\,\,\, and \,\,\,\,\,\,\,\,\,\,}a\op(\bigwedge 
P)=\bigwedge_{p\in P} a\op p \] 
\end{lem}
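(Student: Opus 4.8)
The plan is to establish the three asserted facts about a Girard quantale $(Q,\bot)$ in turn: that $(-)^\perp$ is a contravariant order-isomorphism, that consequently $Q^{op}$ is a quantale, and that the induced multiplication $\op$ converts meets (which are the joins of $Q^{op}$) into the displayed distribution formulas. First I would record the two defining identities of a cyclic dualizing element in the convenient form $a^\perp = \bot\lTo a = a\rTo\bot$ and $(a^\perp)^\perp = a$. From the latter, $(-)^\perp$ is a bijection with itself as inverse. For order-reversal, I would use that $a\rTo(-)$ is a right adjoint and hence order-preserving in both variables in the appropriate sense: if $a\leq b$ then $(-)\ox a \leq (-)\ox b$ as functions, so their right adjoints satisfy $b\rTo(-)\leq a\rTo(-)$ pointwise; evaluating at $\bot$ gives $b^\perp\leq a^\perp$. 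Since $(-)^\perp$ is its own inverse, the converse implication follows, so $(-)^\perp\colon Q\to Q^{op}$ is an order-isomorphism.

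Next, transporting the quantale structure of $Q$ across this isomorphism: define $a\op b := (b^\perp\ox a^\perp)^\perp$ (note the reversal, which is forced because $(-)^\perp$ is contravariant). Associativity of $\op$ is immediate from associativity of $\ox$ together with $(-)^{\perp\perp}=\mathrm{id}$, after carefully tracking the order-reversal through the nested parentheses. The unit for $\op$ is $\bot$: indeed $\bot = \top^\perp$, so $a\op\bot = (\bot^\perp\ox a^\perp)^\perp = (\top\ox a^\perp)^\perp = a^{\perp\perp} = a$, and symmetrically on the other side. Thus $(Q^{op},\op,\bot)$ is a quantale — concretely, $Q$ with the opposite order, meets as joins, and $\op$ as multiplication.

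Finally, for the distribution identities: since $(-)^\perp$ is a contravariant isomorphism, it sends suprema in $Q$ to infima in $Q$, i.e. $(\bigvee P)^\perp = \bigwedge_{p\in P} p^\perp$ and $(\bigwedge P)^\perp = \bigvee_{p\in P} p^\perp$. Now compute directly: $(\bigwedge P)\op a = \big(a^\perp \ox (\bigwedge P)^\perp\big)^\perp = \big(a^\perp\ox\bigvee_{p\in P}p^\perp\big)^\perp = \big(\bigvee_{p\in P}(a^\perp\ox p^\perp)\big)^\perp = \bigwedge_{p\in P}(a^\perp\ox p^\perp)^\perp = \bigwedge_{p\in P}(p\op a)$, using sup-preservation of $\ox$ in the second variable in the middle step. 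The symmetric identity $a\op(\bigwedge P) = \bigwedge_{p\in P}a\op p$ is obtained the same way from sup-preservation of $\ox$ in the first variable.

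I do not anticipate a genuine obstacle here — every step is a short diagram-chase using only the two Girard axioms and the adjointness defining $\rTo$ and $\lTo$. The one place to be careful is bookkeeping: because $(-)^\perp$ reverses order, one must consistently reverse the order of factors when transporting $\ox$ to $\op$, and the equality $\bot\lTo a = a\rTo\bot$ is what guarantees $a^\perp$ is unambiguous and that the left- and right-handed versions of each computation agree.
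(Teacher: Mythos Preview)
Your proof is correct. The paper does not actually supply a proof of this lemma: it is stated inside the definition of a Girard quantale and flagged as standard background (with the distribution identities signposted by the word ``Evidently''), with references to \cite{MT,NR,Ros1,Ros2,Yet} for details. Your argument fills in exactly what one would expect those references to contain.

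One small notational slip worth tidying: when you argue order-reversal, you write that the right adjoints of $(-)\ox a$ and $(-)\ox b$ are $a\rTo(-)$ and $b\rTo(-)$. In the paper's convention the right adjoint of $(-)\ox a$ is $(-)\lTo a$, while $a\rTo(-)$ is right adjoint to $a\ox(-)$. This does not affect the conclusion, since cyclicity gives $\bot\lTo a = a\rTo\bot$ and you only evaluate at $\bot$; but for a clean write-up you should either switch to $(-)\lTo a$ in that step, or argue directly via the adjunction: if $a\le b$ and $c\le b^\perp$, then $a\ox c\le b\ox c\le\bot$, so $c\le a^\perp$, whence $b^\perp\le a^\perp$.
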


\end{itemize}
\end{defn}

\begin{ex}
\begin{enumerate}

\item Every locale (or frame) $L$, i.e., complete lattice satisfying the infinite distributive law, $a\wedge (\bigvee b_i) = \bigvee (a\wedge b_i)~ \forall a, b_i\in L$, is a unital quantale with $\ot = \wedge$ and $\top=1$. As discussed by \cite{Niefield_Rosenthal_1988}, a quantale is a locale if and only if it is commutative, right-sided and idempotent. Following is a list of important locales we will consider in this paper:
    \begin{enumerate}
    \item Truth value two-chain $\Omega=\{\qzero, \qone\}$
    \item Totally ordered 3-chain $3=\{\qzero,1/2,\qone\}$, with residuations given by \[ c\,\lTo a= \begin{cases} \qone& \mbox{if } a\leq c \\ c &\mbox{if } a> c\end{cases} \]
    \item Extended real half-line with opposite ordering ${\sf P_{max}} = ([0,\infty]^{op}, {\rm max}, 0)$, with residuations defined by \cite{Lawvere_1973} to be \[c\,\lTo a =  \begin{cases} c& \mbox{if } a < c  \\ 0 &\mbox{if } a\geq c \end{cases} \]
    \item Lattice of open sets $\mathcal{O}(X)$ in a topological space $X$
    \end{enumerate}	
	
\item The set of relations $\sREL(X)$ on a set $X$ is a unital quantale with standard relational composition as its operation and with the diagonal relation $\Delta_X$ as its unit, i.e., given relations $R, S\colon X\tov X$, 
\[(x,x'')\in R\ot S \quad\mbox{if and only if}\quad \exists x' \mbox{\,\,\,\,\,}(x,x')\in R\,\, \mbox{\,and\,} \,\,(x',x'')\in S\]
	
	
\item The extended real half-line with opposite ordering can be equipped with other quantale structures. In particular, consider \mbox{${\sf P_{+}} = ([0,\infty]^{op}, +, 0)$}, with its operation standard addition extended by $a+\infty = \infty+a=\infty$. It is often called Lawvere's quantale of positive real numbers, as it was first introduced by \cite{Lawvere_1973}.  Residuation is given by ``truncated subtraction'': \[ c\,\lTo a = \begin{cases} c-a& \mbox{if } a\leq c < \infty  \\ 0 &\mbox{if } a\geq c \\ \infty &\mbox{if } a<c=\infty \end{cases} \] 
	
\item The unit interval with multiplication $([0,1],\cdot,1)$ is isomorphic to \mbox{${\sf P_{+}} = ([0,\infty]^{op}, +, 0)$} under the map $x\mapsto -{\sf ln}(x)$, a unital homomorphism of quantales (function preserving arbitrary sups, quantale operation $\ot$ and the unit $\top$) with inverse $y\mapsto {\sf exp}(-y)$. Residuations are given ``truncated division'', as defined by \cite{Hofmann_Reis_2013}:
\[ c\,\lTo a = \begin{cases} c/a& \mbox{if } 0\neq a> c \\ 1 &\mbox{otherwise } \end{cases} \] 
	
\end{enumerate}
\end{ex}

\begin{defn} 
\begin{itemize}
\item  A {\em quantaloid}, coined by \cite{Abramsky_Vickers_1993}, is a category \cQ\ enriched over the category of complete 
lattices and sup-preserving maps.

\item As in the case of quantales, for all arrows $f\colon a\rarr b\in \cQ$, the functors \mbox{$(-)\ot f\colon\cQ(a',a)\rarr\cQ(a',b)$} and $f\ot(-)\colon\cQ(b,b')\rarr\cQ(a,b')$ preserve all sups and thus have right adjoints, also known as residuations, denoted by $(-)\lTo f\colon\cQ(a',b)\rarr\cQ(a',a)$ and $f\rTo(-)\colon\cQ(a,b')\rarr\cQ(b,b')$ respectively. \\

\noindent The following definitions and result are due to \cite{Rosenthal_1992}: 

\item A family of 1-cells $\cD=\{\bot_a\colon a\rarr a\mid a\in\cQ\}$ is a {\it cyclic family} if $f\rTo~\bot_a= \bot_b~\lTo f$, for all $f\colon a\rarr b$, and let $f^\perp$ denote their common value. Then \cD is called a {\it cyclic dualizing family} if $f^{\perp\perp}=f$, for all $f$.  

\noindent A {\it Girard quantaloid} is a quantaloid $\cQ$ together with a {\it cyclic dualizing family} \cD\!.  

\item Mirroring Lemma~\ref{girardquantalelem}, if $\cQ$ is Girard, it has a second composition $\op$ given by the linear logic version of the Morgan duality, such that $\cQ^{co}$ is also a quantaloid (where $-^{co}$ denotes the reversal of 2-cells).

\end{itemize}
\end{defn}

\begin{rem}
Quickly, we introduce a bit of notation that will be used throughout this paper. Suppose $(\cV,\ot,\top)$ is a monoidal category then let $\cB(\cV)$ denote its suspension, i.e., the bicategory with one object $\star$ whose 1-cells and 2-cells are the objects and morphisms of \cV, respectively, with composition given by the tensor product $\ot$ and $\rm \top_\star$ given by the unit $\top$ of $\cV$. 
\end{rem}

The next preliminary subsections will outline constructions that give rise to various examples of quantaloids, but we outline here three key examples.

\begin{ex}\cite{Rosenthal_1996}
\begin{enumerate}
\item $\cB(Q)$, the suspension of a unital quantale, is a quantaloid with one object. Note that a Girard quantaloid with one object is a Girard quantale. 

\item \sREL, the locally posetal bicategory of sets and relations, is a quantaloid with hom-sets ordered under inclusion and standard relational composition: if we have $R\colon X\tov Y$ and $S\colon Y\tov Z$ then define \[(x,z)\in R\ot S \quad\mbox{if and only if}\quad \exists y \mbox{\,\,\,\,\,}(x,y)\in R\,\, \mbox{\,and\,} \,\,(y,z)\in S\]

\item \sORD, the locally posetal bicategory of preordered sets and order ideals, is a quantaloid with the standard relational composition: recall that a preordered set is a set $X$ endowed with a reflexive and transitive relation $\leq_X$ and order ideals are relations $R\colon X\tov Y$ such that 
\[ x\leq_X x', \quad x'Ry\quad\implies\quad  xRy \quad{\rm and}\quad  y\leq_X y', \quad xRy\quad\implies\quad  xRy'\]
\end{enumerate}
\end{ex}

\begin{defn}\cite{Rosenthal_1991}
If \cQ\ and $\cQ'$ are quantaloids, then a {\em quantaloid homomorphism} is a functor $F\colon\cQ\rarr\cQ'$ such that on hom-sets it induces a sup-lattice morphism \mbox{$\cQ(a,b)\rarr \cQ'(F(a),F(b))$} for all $a,b\in\cQ$.
\end{defn}

\subsubsection{The category \QRel}

A relation $R:X\tov Y$ assigns a truth value to each pair in $X\times Y$, as such it can be understood as a function from $X\times Y$ to the two-chain quantale. \sREL\ can thus be generalized to arbitrary quantales by considering quantale-valued relations as follows, giving rise to a multitude of quantaloid examples. 

\begin{defn}
If $Q$ is a quantale, we can form the category \QRel\ whose objects are sets and arrows
$R\colon X\tov Y$ are functions $R\colon X\times Y\rarr Q$, called $Q$-relations. Given $R\colon X\tov Y$
and $S\colon Y\tov Z$, the composition $R\ot S:X\tov Z$ is defined by 
\[ R\ot S(x,z)=\bigvee_{y\in Y} R(x,y)\ot S(y,z)\]
Note that the use of $\ot$ on the left refers to composition in \QRel\ and on the right refers to multiplication in $Q$. Identities are given by
\[\top_X(x,x')=\begin{cases} \top& \mbox{if } x=x'  \\ \qzero &\mbox{if } x\neq x'\end{cases}\]
\end{defn}

\begin{lem}\cite{Hofmann_Seal_Tholen_2014}\label{QRel_quantaloid}
If $Q$ is a unital quantale, then \QRel\ is a quantaloid under point-wise ordering, so in particular it is a locally posetal bicategory. 
\end{lem}

As \QRel\ is a quantaloid, given a Q-relation $R\colon X\tov Y$, there exists residuation functors $(-)\lTo R$ and $R\rTo(-)$, defined for $T\colon X\tov Z$ and $U\colon W\tov Y$ by
\begin{align*}
&R\rTo\, T(y,z) = \bigwedge_{x\in X} R(x,y)\rTo\, T(x,z) & U\,\lTo R(w,z) = \bigwedge_{y\in Y} U(w,y)\,\lTo R(x,y)
\end{align*}

\begin{ex}
\begin{enumerate}
	\item $\sREL\cong 2$-\sREL, the quantaloid of sets and relations. 
	\item ${\sf P_{+}}$-\sREL, a quantaloid of sets and extended distance relations $d\colon X\times Y\rarr [0,\infty]$, with composition $\ot$ defined, for $D_1\colon X\times Y\rarr [0,\infty]$ and $D_2\colon Y\times Z\rarr [0,\infty]$,  by \[ (D_1\ot D_2)(x,z) = \bigwedge_{y\in Y} D_1 (x,y)+D_2(y,z) \] and identities $\top_X$ defined by \[\top_X(x,x')=\begin{cases} 0& \mbox{if } x=x'  \\ \infty &\mbox{if } x\neq x'\end{cases}\]
	
	Alternatively, one can consider $[0,1]$-$\sREL$, isomorphic to ${\sf P_{+}}$-\sREL as the map $[0,1]\cong {\sf P_{+}}$ extends to an isomorphism of quantaloids, with composition $\ot$ defined, for $D_1\colon X\times Y\rarr [0,1]$ and $D_2\colon Y\times Z\rarr [0,1]$,  by \[ (D_1\ot D_2)(x,z) = \bigvee_{y\in Y} D_1 (x,y)\cdot D_2(y,z) \] and identities $\top_X$ defined by \[\top_X(x,x')=\begin{cases} 1& \mbox{if } x=x'  \\ 0 &\mbox{if } x\neq x'\end{cases}\]
	
	\item ${\sf P_{max}}$-\sREL, another quantaloid of sets and extended distance relations $d\colon X\times Y\rarr [0,\infty]$, but with a different composition $\ot$ defined, for $D_1\colon X\times Y\rarr [0,\infty]$ and $D_2\colon Y\times Z\rarr [0,\infty]$,  by \[ (D_1\ot D_2)(x,z) = \bigwedge_{y\in Y} {\sf max}(D_1 (x,y),D_2(y,z)) \] 
	
	The last two examples will connect to Lawvere metric spaces. This will be discussed in \mbox{Example \ref{Q-Mod_Ex}.}
\end{enumerate}
\end{ex}

Note that there is a quantaloid embedding (quantaloid homomorphism which is injective on objects and a faithful functor) from the suspension of the quantale $\mathcal{ B}(Q)$ into \QRel:
\[ \mathcal{ B}(Q)\hookrightarrow \QRel: \quad\quad \star\mapsto 1=\{*\} \quad\quad a\mapsto R_a\colon 1\tov 1\quad {\rm where}\quad R_a(*,*)=a \]

\subsection{Modules, matrices and monads}

We assume the reader is familiar with the theory of bicategories, introduced by Benabou. Appropriate references are \cite{Betti_Carboni_Street_Walters_1983, Leinster_1998}. In this section, we introduce three standard constructions in bicategory theory, but first we must review the notion of a biclosed bicategory.

\subsubsection{Biclosed bicategories}
Recall the definitions of right extensions and right liftings, in the sense of \cite{Street_2014}.

\begin{defn}\label{ext}
Let \cB\ be a bicategory. A {\em right extension} of $B\colon X\rarr Z$ along $A\colon X\rarr Y$ in \cB\ is a 1-cell which we denote $A\rTo~B$, also 
denoted by $X{\rm Mod}(A,B)$, together with a 2-cell 
$$\bfig
\ptriangle/->`->`-->/<400,400>[X`Y`Z;A`B`A\rTo B]
\morphism(220,250)<-180,0>[`;\varepsilon_{X,A}]
\efig$$
inducing a bijection (called the {\em transpose}) between 2-cells $C\rarr A\,\rTo B$ and $A\ot C \rarr B$, for all $C\colon Y\rarr Z$. 

\noindent A {\it right lifting} of $C\colon Z\rarr Y$  through $A\colon X\rarr Y$ is a right extension in $\mathcal{ B}^{op}$, i.e., a 1-cell which we denote $C~\lTo A$, also denoted by ${\rm Mod}Y(A,C)$, together with a 2-cell 
$$\bfig
\dtriangle/<--`->`->/<400,400>[X`Z`Y;C\lTo A`A`C]
\morphism(340,240)|l|<0,-170>[`;\varepsilon_{C,Y}]
\efig$$
inducing a bijection (called the {\em transpose}) between 2-cells $B\rarr C\lTo A$ and $B\ot A \rarr C$, for all $B\colon Z\rarr X$. \\

\noindent A bicategory \cB\ is called {\it biclosed} if it admits all right extensions and right liftings.
\end{defn}

\begin{rem} \label{closed} 
We have chosen to use the more traditional notation and terminology from bicategory theory. \cite{Cockett_Koslowski_Seely_2000} chose their terminology to be in agreement with Lambek's non-commutative linear logic rather than the theory of bicategories. We note that our notion of right extension is the same as their notion of {\it right hom} and our notion of right lifting is their {\it left hom}.
\end{rem}

\begin{ex}
Suppose $(\cV,\ot,\top)$ is a symmetric monoidal closed category and consider its suspension $\cB(\cV)$. Taking $A \rTo~B = {\rm Hom}(A,B)$ and $C \lTo A = {\rm Hom}(A,C)$, we get: $\cB(\cV)$ is a biclosed bicategory.
\end{ex}

\begin{prop}\label{natural} If \cB\ is a biclosed bicategory, then the induced 2-cells $$(A\ot C) \rTo\, B \rarr C\rTo\, (A\rTo\, B)\quad\quad {\rm and}\quad\quad D\,\lTo(A\ot C)\rarr (D\,\lTo C)\,\lTo A$$ are invertible and natural in $A\colon X\rarr Y$,  $B\colon X\rarr W$, $C\colon Y\rarr Z$ and $D\colon W\rarr Z$.
\end{prop}
\begin{proof} This follows from the universal properties of the extensions and liftings.
\end{proof}

\subsubsection{\cB-categories and modules}

The following definition was first introduced for locally posetal bicategories by \cite{Walters_1981} and then defined for more general bicategories by \cite{Street_2005}.

\begin{defn}
Let \cB\ be a biclosed bicategory.
\begin{itemize}
\item A {\em \cB-category} $M$ consists of the following data:
\begin{itemize}
	\item for each 0-cell $A\in\cB$, a set $M_A$ ``over $A$'',
	\item for each pair of elements $x, x'$ over 0-cells $A, B$ respectively, a 1-cell $M(x,x')\colon A\rarr B$ in \cB,
	\item for each triple of elements $x, x', x''$ over 0-cells $A, B, C$ respectively, 2-cells \mbox{$\eta\colon1_A\rarr M(x,x)$} and $\mu\colon M(x,x')\ot M(x',x'')\rarr M(x,x'')$ in \cB\
\end{itemize}
satisfying the axioms of left and right identities, and associativity.

Alternatively, a \cB-category is lax functor from the union of sets $M_A$ viewed as an indiscrete bicategory to \cB.

\item A {\em \cB-module} $\Theta\colon M\tov N$ assigns:
\begin{itemize}
	\item to each pair $x\in M_A$ and $y\in N_B$ over 0-cells $A$ and $B$, a 1-cell $\Theta(x,y)\colon A\rarr B$ in \cB,
	\item to each triple $x, x'\in M_A$ and $y\in N_B$ over 0-cells $A$ and $B$, a left action 2-cell $\rho\colon M(x,x')\ot \Theta(x',y)\rarr \Theta(x,y)$ in \cB, 
	\item to each triple $x\in M_A$ and $y, y'\in N_B$ over 0-cells $A$ and $B$, a right action 2-cell $\lambda\colon \Theta(x,y)\ot N(y,y')\rarr \Theta(x,y')$ in \cB, 
\end{itemize}
satisfying five compatibility axioms.

\item Given \cB-modules $\Theta, \Phi\colon M\tov N$, a morphism of \cB-modules $\alpha\colon \Theta\rarr\Phi$ is a family of 2-cells $\alpha\colon\Theta(x,y)\rarr\Phi(x,y)$ in \cB\ compatible with the left and right actions $\lambda, \phi$. 

\item Given \cB-modules $\Theta\colon M\tov N$ and $\Pi:N\tov P$, the composite $\Theta\ot\Pi\colon M\tov N$ is defined, for $x\in M_A$ and $z\in P_C$ over $A$ and $C$, by $\Theta\ot\Pi(x,z)$ as a colimit in $\cB(A, C)$.

\item By restricting to certain bicategories \cB, we can define the bicategory \BMod\ of \cB-categories and \cB-modules. For more details, see \cite{Street_2005}.
\end{itemize}
\end{defn}

Suppose $(\cV,\ot,\top)$ is a complete and cocomplete symmetric monoidal closed category.  Then we can consider \cB(\cV)-\Mod$\,\cong\,$\cV-\prof, the bicategory of \cV-categories, \cV-profunctors (sometimes called \cV-distributors), and \cV-transformations. \\

\noindent Note that composition $A\ot_RB$ of \cV-profunctors $A\colon Q\tov R$ and $B\colon R \tov S$ can be described by the coend $$(A\otimes_R B)(q,s)=\int^r A(q,r)\otimes B(r,s)$$ and $Q( -, -)\colon Q\tov Q$ is the identity 1-cell.

\subsubsection{\cB-matrices}

The work of categories enriched in bicategories was further developed by \cite{Betti_Carboni_Street_Walters_1983}. They introduced the bicategory of \cB-matrices as a stepping stone to the study of \BMod.

\begin{defn}
Let \cB\ be a locally small-cocomplete bicategory with a small set of objects $\cB_0$. 
\begin{itemize}
	\item Given a family $X=(X_A)_{A\in \mathcal{ B}_0}$ of small sets indexed by $\cB_0$, an element $x\in X_A$ is said to be an {\em element of $X$ over $A$.}
    
	\item Given a pair of families $X=(X_A)_{A\in \mathcal{B}_0}$ and $Y=(Y_A)_{A\in \mathcal{B}_0}$, a {\em \cB-matrix} $S\colon X\tov Y$ assigns to each pair $x,y$ of elements over $A, B\in\cB_0$ a 1-cell $S(x,y)\colon A\rarr B$ in \cB. Composition of \cB-matrices $S\colon X\tov Y$ and $T\colon Y\tov Z$ is by matrix multiplication, i.e., $(S\ot T)(x,z) = \coproduct_{y\in Y} S(x,y)\ot T(y,z)$.
	\item A {\em morphism of \cB-matrices} $\alpha\colon S\rarr S'$ is a family of 2-cells $\alpha_{x,y}\colon S(x,y)\rarr S'(x,y)$ in \cB.
	\item Define \MatrB\ to be the bicategory with families of small sets indexed by $\cB_0$ as 0-cells, \cB-matrices as 1-cells and \cB-matrix morphisms as 2-cells. 
\end{itemize}
\end{defn}

Suppose $(\cV,\ot,\top)$ is a symmetric monoidal closed category with set-indexed products and coproducts. Then, we can consider \Matr\cB(\cV)$\,\cong\,$\cV-\Matr\ the bicategory of sets, $\cV$-matrices, and  $\cV$-matrix morphisms. \\

\noindent Recall from \cite{Carboni_Kasangian_Walters_1987} that a $\cV$-matrix $A\colon X\tov Y$  is a function $A\colon X\times Y\rarr {\sf ob}\cV$. A morphism $f\colon A\rarr B$ of $\cV$-matrices $X\tov Y$ is a family \[\{f (x,y) \colon A(x,y)\rarr B(x,y) \mid (x,y)\in X\times Y\}\] of morphisms of $\cV$.  Composition of $A\colon X\tov Y$ and $B\colon Y\tov Z$ is given by matrix multiplication \[(A\otimes_Y B)(x,z)=\coproduct_{y\in Y} A(x,y)\otimes B(y,z)\] and the identity $X\tov X$ is

 $$\top_X(x,x')=
  \begin{cases}
   \top& x=x' \\
    \qzero & \text{otherwise}\\
\end{cases}$$

\noindent where $\top$ is the unit for $\ot$ and  $\qzero$ is initial in $\cV$. \\

Given \cV-matrices $A\colon X\tov Y$, $B\colon X\tov Z$, and $C\colon Z \tov Y$, taking
\[ A\rTo\,B(y,z)= \prod_{x\in X}  \cV(A(x,y),B(x,z)) \qquad C\,\lTo A(z,x)= \prod_{y\in Y}  \cV(A(x,y),C(z,y)) \] we get the well-known result within folklore, see \cite{Blute_Cockett_Seely_Trimble_1996} and \cite{Lack_2010}:
\begin{lem}\label{VMat_biclosed} If \cV\ is a symmetric monoidal closed category with set-indexed products and coproducts, then \cV-\Matr\ is a biclosed bicategory.
\end{lem}

\subsubsection{Monads in \cB\ and modules}

\cite{Carboni_Kasangian_Walters_1987} demonstrated that \BMod\ can be broken into the constructions of \MatrB\ and of \MonB, the bicategory of monads and modules in \cB\ as defined below.

\begin{defn}
Let \cB\ be a biclosed bicategory with local equalizers and coequalizers stable under composition. 
\begin{itemize}
	\item A {\em monad} in \cB\ is a 1-cell $Q\colon X\rarr X$ together with two 2-cells $e\colon 1_X\rarr Q$ and 
$m\colon Q\ot Q \rarr Q$ satisfying the usual associativity and identity axioms. Note that a monad $(X,Q)$ (denoted 
by just $Q$ when $X$ is understood)  is  a monoid in the category $\mathcal{ B}(X,X)$. 
	\item A {\em $(Q,R)$-module} $A\colon(X,Q)\tov(Y,R)$ is a 1-cell $A\colon X\rarr Y$ together with action 2-cells $\lambda\colon Q\ot A\rarr A$ and $\rho\colon A\ot R\rarr A$ satisfying (one-sided) associative and unit laws, and a diagram expressing the commutativity of the two actions.
	\item A {\em $(Q,R)$-module morphism} is a 2-cell $f\colon A\rarr B$ satisfying compatibility conditions with respect to the actions.
\item Let \MonB\ denote the bicategory of monads, modules, and module morphisms; with the composition $A\ot_R B$ of $A\colon (X,Q) \tov (Y,R)$ and $B\colon (Y,R)\tov (Z,S)$ given by the local coequalizer \[ A\ot R \ot B\two ^{\rho_A\ot B}_{A\ot \lambda_B}A \ot B\to A\ot_RB\] in  $\mathcal{ B}(X,Z)$, and  identity 1-cell $\ttop_{(X,Q)}=Q\colon (X,Q)\tov (X,Q)$.
\end{itemize}

\end{defn}

Note that identity 1-cell $\top_X\colon X\rarr X$ in \cB\ is a trivial monad, every 1-cell $A\colon X\rarr Y$ is an $(\top_X,\top_Y)$-module, and every 2-cell $f\colon A\rarr B$ is a $(\top_X,\top_Y)$-module homomorphism. Moreover, $X\mapsto \top_X$ defines a pseudo functor which is a left pseudo adjoint to the forgetful (lax) functor $\MonB\rarr\cB$. \\

Given $A\colon (X,Q)\tov (Y,R)$, $B\colon (X,Q)\tov (Z,S)$, and $C\colon (Z,S) \tov (Y,R)$, the right extension $B\rTo_{Q}\, A$ in \MonB\ of $B$ along $A$ is obtained by taking the local equalizer.
\[ A\rTo_{Q}\, B\to<125> A\rTo\, B  \two ^{\lambda^*_A}_{\hat \lambda_B} (Q\ot A)\rTo\,B \] 
where $\lambda^*_A = \lambda_A \rTo\, B$ and $\hat \lambda_B$ is the transpose of 
\[ (Q\ot A)\ot (A\rTo\, B)\xrightarrow{\sim} Q\ot (A\ot (A\rTo\, B)) \xrightarrow{Q\ot \epsilon_{X,A}} Q\ot B \xrightarrow{\lambda_B} B \]
\noindent Similarly the right lifting $C\,\lTo_R A$ in \MonB\ of $C$ through $A$ is 
\[ C\,\lTo_R A\to<125> C\,\lTo A \two ^{\rho^*_A}_{\hat \rho_B}  C\,\lTo (A\ot R) \] 

\noindent Note that given morphisms $A\colon X\rarr Y, B\colon X\rarr Z$ and $C\colon Z\rarr Y$ in \cB\, the 2-cells 
\[ A\rTo_{\top_X}\, B\rarr A\rTo\, B \quad\quad{\rm and}\quad\quad C\,\lTo_{\top_Y} A\rarr C\,\lTo A  \] 
defining the extensions and liftings in \MonB, are invertible, and without loss of generality, we can take them to be equalities. \\

Moreover, given the above definitions, the following is a well-known result, see \cite{Betti_Carboni_Street_Walters_1983} and \cite{Lack_2010}:
\begin{lem}\label{MonB_biclosed} 
If \cB\ is a biclosed bicategory with local equalizers and coequalizers stable under composition, then \MonB\ is a biclosed bicategory.
\end{lem}

By examining the definitions of the three constructions, it is immediate that: 
\begin{prop}\cite{Carboni_Kasangian_Walters_1987}\label{BMod_as_MonMatrB}
If \cB\ is a distributive bicategory (locally cocomplete bicategory with colimits preserved by composition on both sides), \BMod\ is biequivalent to \Mon\MatrB.
\end{prop}

\noindent Furthermore, the constructions are idempotent.
\begin{prop}\cite{Carboni_Kasangian_Walters_1987}\label{idempotent}
If \cB\ is a distributive bicategory, then (\BMod)-\Mod\ is biequivalent to \BMod. If \cB\ is a bicategory with local coequalizers stable under composition, then \Mon(\MonB) is biequivalent to \MonB. If \cB\ is a bicategory with small local coproducts stable under composition, then \Matr(\MatrB) is biequivalent to \MatrB. 
\end{prop}

Thus, by Lemmas \ref{VMat_biclosed} and \ref{MonB_biclosed}, and calculating the liftings and extensions by the ends
\[(A \rTo B)(r,s)=\int_q {\rm Hom}(A(q,r),B(q,s)) \qquad (C \lTo A)(s,q)=\int_r {\rm Hom}(A(q,r),B(s,r)) \] for \cV-profunctors $A\colon Q\tov R$, $B\colon Q\tov S$ and $C\colon S\tov R$, we get:

\begin{lem}\label{VProf_biclosed}
Suppose \cV\ is a complete and cocomplete symmetric monoidal closed category, then \cV-\prof\ is a biclosed bicategory.
\end{lem}

\subsubsection{Enrichment in a quantaloid \cQ}

The previous bicategorical constructions can of course be considered in the special case of a quantaloid \cQ, as done by \cite{Rosenthal_1992}. We take the time to describe them in detail as a main result of this paper is their generalization to the linear context.

\begin{defn}\label{QMod}Let \cQ\ be a quantaloid.
\begin{itemize}

\item A {\em \cQ-category} is a pair $M=(X,\rho)$ where:
\begin{itemize}
    \item $X$ is a set,
    \item $\rho$ is a function $\rho\colon X\rarr {\sf ob}\cQ$, and 
    \item there is a function, called the {\em enrichment} assigning to each pair $(x,x')\in X\times X$ a morphism $M(x,x')\colon\rho(x)\rarr\rho(x')$ such that $\forall x, x', x''\in X$
    \[ \top_{\rho(x)}\leq M(x,x) \qquad M(x,x')\ot M(x',x'')\leq M(x,x'')\]
\end{itemize}
Alternatively, a \cQ-category is a lax functor from set $X$ to \cQ, when $X$ is viewed as an indiscrete bicategory.

\item Consider \cQ-categories $M=(X,\rho_M)$ and $N=(Y,\rho_N)$. A {\em \cQ-module} $\Theta\colon M\tov N$ consists of an assignment of a morphism $\Theta(x,y)\colon \rho_M(x)\rarr\rho_N(y)$ to every pair $(x,y)\in X\times Y$ such that $\forall x, x'\in X, y, y'\in Y$
\[ \Theta(x,y) \ot N(y,y')\leq \Theta(x,y') \qquad M(x,x')\ot\Theta(x',y) \leq \Theta(x,y) \]

\item Define the category \QMod\ whose objects are \cQ-categories and arrows are \cQ-modules. Given $M\stackrel{\Theta}{\tov}N \stackrel{\Pi}{\tov}P$, composition $\Theta\ot\Pi\colon M\tov P$ is defined by
\[ (\Theta\ot\Pi)(x,z)=\bigvee_{y\in Y}\Theta(x,y)\ot\Pi(y,z) \]
Identity 1-cells $\ttop_M\colon M\tov M$ are defined by $\ttop_M(x,x')=M(x,x')$. Note that the use of $\ot$ on the left refers to composition in \QMod\ and on the right refers to composition in \cQ. 
\end{itemize}
\end{defn}

\begin{rem}
We have chosen to denote the bicategory of \cQ-categories and \cQ-modules as \QMod\ in agreement with previously introduced notation, while it is called {\sf Bim}(\cQ) by \cite{Rosenthal_1992}.
\end{rem}

\begin{defn}
Let \cQ\ be a quantaloid. Define the category \MatrQ\ whose objects are small families of objects in \cQ, i.e., pairs $(X, \gamma)$ of a set $X$ and a function $\gamma\colon X\rarr ob\cQ$, and arrows are \cQ-matrices $r\colon(X, \gamma)\tov (Y,\phi)$, i.e., families of morphisms $(r_{x,y}\colon\gamma(x)\rarr\phi(y))_{(x,y)\in X\times Y}$ in \cQ. Given \cQ-matrices $(X, \gamma)\stackrel{r}{\tov}(Y,\phi)\stackrel{s}{\tov}(Z,\chi)$, composition $r\ot s\colon(X,\gamma)\tov (Z,\chi)$ is defined by 
\[ (r\ot s)_{x,z}=\bigvee_{y\in Y}r_{x,y}\ot s_{y,z} \]
Identity 1-cells $\ttop_{(X,\gamma)}\colon(X,\gamma)\tov (X, \gamma)$ are defined by ${\ttop_{(X,\gamma)}}_{x,x'} = \begin{cases}
\top_{\gamma(x)}& \mbox{if } x=x' \\  \qzero_{\gamma(x),\phi(x')} &\mbox{if } x\neq x' \end{cases}$.
\end{defn}

\begin{ex}
\begin{enumerate}
	\item It is a standard result that the category of $\cB(\Omega)$-matrices, $\Matr(\cB(\Omega))$, is isomorphic to $\sREL$.
	\item Consider $\Matr(\cB(\Omega))$. The objects are pairs $(X,\gamma)$ of a set $X$ and a trivial function mapping the set to $\star$. The $\cB(Q)$-matrices are families $(r_{x,y}\colon\star\rarr\star)_{(x,y)\in X\times Y}$, a collections of elements in $Q$, i.e., a function $X\times Y\rarr Q$, and composition of $\cB(Q)$-matrices is exactly the one of $Q$-relations. As such, {\sf Matr}$\Matr(\cB(Q))\,\cong\,\QRel$, generalizing the first example.
\end{enumerate}
\end{ex}

\begin{defn}
Let \cQ\ be a quantaloid. 
\begin{itemize}
	\item A {\em monad} $(a,m)$ in \cQ\ is an object $a$ equipped with a morphism $m\colon a\rarr a$ such that 
	\[\top_a\leq m \qquad m \ot m \leq m\]
	Alternatively, a monad in \cQ\ is a lax functor from the terminal bicategory 1 to \cQ.
	
	\item A {\em $(m,n)$-module} $f\colon(a,m)\tov(b,n)$ is a morphism $f\colon a\rarr b$ in \cQ\ such that 
	\[ m\ot f\leq f \qquad f\ot n\leq f\]
	
	\item \MonQ\ is the category of monads and monad modules. in \cQ. Composition is directly inherited from \cQ\ and the identity 1-cell $\ttop_{a,m}\colon (a,m)\tov(a,m)$ is $m:a\rarr a$.
\end{itemize}
\end{defn}

\begin{ex}
\begin{enumerate}
	\item It is immediate that monads in \sREL\ are preordered sets and monad modules are order ideals, thus $\Mon(\sREL)\,\cong\,\sORD$.
	
	\item The above example can be generalized to \QRel. Consider $\Mon(\QRel)$: monads in \QRel\ are $Q$-categories, pairs $(X, m)$, set $X$ equipped with a relation $m\colon X\tov X$ which is $Q$-reflexive and $Q$-transitive, meaning \[ \top \leq m(x,x) \quad\quad{\rm and}\quad\quad m(x,x')\ot m(x',x'') \leq m(x,x'') \quad\quad \forall x,x',x''\in X \]
	while monad modules are $Q$-profunctors, relations $S\colon (X,m)\tov (Y, n)$ \[ m(x,x')\ot S(x',y) \leq S(x,y) \quad{\rm and}\quad S(x,y')\ot n(y',y) \leq S(x,y) \quad \forall x,x'\in X, y,y'\in Y \]
\end{enumerate}
\end{ex}

These constructions are more than just categories:
\begin{lem}\cite{Rosenthal_1992}\label{QMod_MatrQ_MonQ_Quantaloids}
\QMod, \MatrQ\ and \MonQ\ are quantaloids, so in particular they are locally posetal bicategories. 
\end{lem}

As \QMod, \MatrQ\ and \MonQ\ are quantaloids, there exist residuation functors. In the case of \QMod\ and \MatrQ, the formulas are similar to \QRel, the inf of the point-wise residuations, while \MonQ\ inherits residuations directly from \cQ. \\

Propositions \ref{BMod_as_MonMatrB} and \ref{idempotent} apply to the case of quantaloids:
\begin{prop}\cite{Rosenthal_1992}\label{QMod_as _MonMatrQ}
\QMod\ is biequivalent to ${\sf Mon}\MatrQ$ and the constructions are idempotent.
\end{prop}

\noindent Thus, we see that $\sORD\cong\cB(\Omega)$-$\Mod\cong\sREL$-\Mod\ and $Q$-$\prof\cong\cB(Q)$-$\Mod\cong (\QRel)$-\Mod.

\begin{ex}
\begin{enumerate}\label{Q-Mod_Ex}
	\item Consider $Q={\sf P_{+}}$, then ${\sf P_{+}}$-$\prof$ is the quantaloid of Lawvere metric spaces and ${\sf P_{+}}$-bimodules (in the sense of \cite{Lawvere_1973}). Objects are sets $X$ equipped with extended distance functions $m\colon X\times X\rarr [0,\infty]$ which satisfy the axiom of point inequality and the triangle inequality: \[ m(x,x)\leq 0 \quad\quad{\rm and}\quad\quad m(x,x'')\leq m(x,x')+m(x',x'')\quad\quad \forall x,x',x''\in X \] In other words the objects are extended quasi-pseudo-metric spaces, or simply Lawvere metric spaces. The arrows $(X,m)\tov (Y,n)$ are real-valued functions $F\colon X\times Y\rarr [0,\infty]$ satisfying \[ F(x,y)\leq m(x,x')+F(x',y) \quad{\rm and}\quad F(x,y)\leq F(x,y')+n(y',y) \quad \forall x,x'\in X, y,y'\in Y \] The standard category {\sf Met} of Lawvere metric spaces and non-expansive maps is equivalent to the category of ${\sf P_{+}}$-categories and ${\sf P_{+}}$-functors. Every ${\sf P_{+}}$-functor gives rise to a pair of adjoint  ${\sf P_{+}}$-bimodules, therefore ${\sf P_{+}}$-$\prof$ is a quantaloid of Lawvere metric spaces with a more general notion of morphisms. \\
	
	Alternatively, take $Q=[0,1]$, then $[0,1]$-$\prof\cong{\sf P_{+}}$-$\prof$. Objects are sets $X$ equipped with a functions $m\colon X\times X\rarr [0,1]$ satisfying
	\[ 1\leq m(x,x) \quad\quad{\rm and}\quad\quad m(x,x')\cdot m(x',x'')\leq m(x,x'') \quad\quad \forall x,x',x''\in X \] while arrows $(X,m)\tov (Y,n)$ are functions $F\colon X\times Y\rarr [0,1]$ satisfying \[ m(x,x')\cdot F(x',y)\leq F(x,y) \quad{\rm and}\quad F(x,y')\cdot n(y',y)\leq f(x,y) \quad \forall x,x'\in X, y,y'\in Y \]
	
	\item If instead, we consider $Q={\sf P_{max}}$, then ${\sf P_{max}}$-$\prof$ is the quantaloid of Lawvere ultrametric spaces and ${\sf P_{max}}$-bimodules. The metric spaces $(X,m)$ satisfy the strengthened triangle inequality: \[m(x,x'')\leq {\sf max}(m(x,x'), m(x',x''))\quad\quad \forall x,x',x''\in X \] and the arrows $(X,m)\tov (Y,n)$ are real-valued functions $F\colon X\times Y\rarr [0,\infty]$ satisfying \[ F(x,y)\leq {\sf max}(m(x,x'), F(x',y)) \quad{\rm and}\quad F(x,y)\leq {\sf max}(F(x,y'),n(y',y)) \quad \forall x,x'\in X, y,y'\in Y \]

For more details, see \cite{Lawvere_1973}. 


\end{enumerate}
\end{ex}

Note that there is a quantaloid embedding of \cQ\ into \QMod,
\[\cQ\hookrightarrow\QMod: \quad\quad a\mapsto M_a = (1, \rho_a) \quad{\rm where}\quad 1={*},\quad \rho_a(*)=a\quad {\rm and}\quad M_a(*,*)=\top_a \]
\[ f\colon a \rarr b \mapsto \Theta_f\colon M_a\tov M_b\quad{\rm where}\quad \Theta_f(*,*)=f \]

of \cQ\ into \MatrQ,
\[\cQ\hookrightarrow\MatrQ: \quad\quad a\mapsto (1, \gamma_a) \quad{\rm where}\quad 1={*},\quad \gamma_a(*)=a\quad \]
\[ \quad\quad\quad\quad\quad\quad\quad\quad\quad\quad\quad\quad f\colon a \rarr b \mapsto f\colon (1,\gamma_a)\tov (1,\gamma_b)\quad{\rm where}\quad f_{*,*}=f \]

and of \cQ\ into \MonQ,
\[\cQ\hookrightarrow\MonQ: \quad\quad a\mapsto (a,\top_a)~~{\rm the~trivial~monad} \]
\[ \quad\quad\quad\quad\quad\quad\quad\quad\quad\quad f\colon a \rarr b \mapsto f\colon (a,\top_a)\tov (b,\top_b) \]

\section{Linear bicategories}

We introduce the theory of {\it linear bicategories} as defined by \cite{Cockett_Koslowski_Seely_2000}.  The material in this subsection is entirely from that paper.\\

Linear bicategories are an extension of the theory of {\em linearly distributive categories}, due to Cockett and Seely \cite{Cockett_Seely_1997}. Linearly distributive categories axiomatize the multiplicative fragment of linear logic in a way that is closer to the syntax. So the two binary connectives, $\otimes$\ and $\oplus$, are taken as primitives, and negation can be added if one wishes. \\

As usual with bicategories, one begins with a class of {\it 0-cells} which we will denote \mbox{$\cB_0=\{X,Y,Z,\ldots\}$}. Then for every pair of 0-cells, one has a category $\cB(X,Y)$. The objects of $\cB(X,Y)$ are called {\it 1-cells} and the arrows are called {\it 2-cells}. But now we have two composition functors:
\[\ot,\op\colon \cB(X,Y)\times\cB(Y,Z)\lrarr\cB(X,Z)\]
Each of these compositions gives a bicategory structure. Thus for each composition we have all of the morphisms and coherences that this entails. In particular we must have identity 1-cells for each of the two compositions:
\[\top_X\colon X\rarr X \qquad\mbox{and}\qquad  \bot_X\colon X\rarr X\]

These two bicategory structures are related by a linear distribution as follows. Given 0-cells $X, Y, Z, W$ we have two functors:
\[ -\ot(-\op-), (-\ot-)\op-\colon \cB(X,Y)\times\cB(Y,Z)\times\cB(Z,W)\lrarr \cB(X,W)\]
and we require a natural transformation between them, which is not necessarily an 
isomorphism: 
\[\delta^L:-\ot(-\op -)\Longrightarrow (-\ot-)\op-\]
Symmetrically, we also need a natural transformation 
\[\delta^R:(-\op-)\ot-\lrarr-\op(-\ot-).\]
All of this structure must satisfy coherence requirements detailed in \cite{Cockett_Koslowski_Seely_2000}. \\

One of the main goals of this paper will be to provide new examples of linear bicategories, but we mention here the quintessential example \sREL, as it will be the example on which ours are based.\\

\sREL\ is a locally posetal bicategory with its first composition being the standard one.  But we have a second composition:  for $R\colon X\tov Y$ and $S\colon Y\tov Z$, define
\[(x,z)\in R\op S \mbox{\,\,\,\,\,\, if and only if\,\,\,\,} \forall y \mbox{\,\,\,\,\,\,}(x,y)\in R\,\, 
\mbox{\, or\,} \,\,(y,z)\in S\]

We quickly mention here that the appropriate notion of functor between linear bicategories was developed, although they will only make a brief appearance in this article.

\begin{defn}\cite{Cockett_Koslowski_Seely_2000}
Let \cB\ and \cB' be linearly distributive categories. A {\em linear functor} $F = (F_\ot, F_\op)\colon\cB\rarr\cB'$ consists of:
\begin{itemize}
\item a lax monoidal functor $(F_\ot, m_\top, m_\ot)\colon(\cB,\ot,\top)\rarr(\cB',\ot,\top)$, equipped with
\item a lax comonoidal functor $(F_\op, n_\bot, n_\op)\colon(\cB,\op,\bot)\rarr(\cB',\op,\bot)$, equipped with
\item four natural transformations, known as {\em linear strengths}, 
\begin{align*}
&v_\ot^R\colon \op; F_\ot \rarr (F_\op \times F_\ot); \op & {v_\ot^R}_{A,B} \colon F_\ot(A\op B)\rarr F_\op(A)\op F_\ot(B)\\
&v_\ot^L\colon \op; F_\ot \rarr (F_\ot \times F_\op); \op & {v_\ot^L}_{A,B} \colon F_\ot(A\op B)\rarr F_\ot(A)\op F_\op(B)\\
&v_\op^R\colon (F_\ot\times F_\op);\ot \rarr \ot; F_\op & {v_\op^R}_{A,B} \colon F_\ot(A)\ot F_\op(B)\rarr F_\op(A\ot B)\\
&v_\op^L\colon (F_\op\times F_\ot);\ot \rarr \ot; F_\op & {v_\op^L}_{A,B} \colon F_\op(A)\ot F_\ot(B)\rarr F_\op(A\ot B)
\end{align*}
\end{itemize}
subject to various coherence conditions.
\end{defn}

\subsubsection{Cyclic $*$-autonomous bicategories}

The notion of a $*$-autonomous category was originally introduced independently of linear logic by \cite{Barr_1979}, who was trying to capture some of the dualities present in various categories of topological vector spaces. It was only later seen to be appropriate to model the multiplicative fragment of linear logic by \cite{Seely_1989}. Barr's definition of $*$-autonomous category was a symmetric monoidal closed category with a dualizing object. But linear logic suggested an equivalent definition as a linearly distributive category with negation, see \cite{Cockett_Seely_1997}. Similarly, as linearly distributive categories have been generalized to linear bicategories, it is natural to generalize the notion of cyclic $*$-autonomous categories as well.

\begin{defn}\cite{Koslowski_2001}\label{cyclic_starauto}
A bicategory \cB\ is a {\em cyclic $*$-autonomous bicategory} if 
\begin{itemize}
	\item for any pair of 0-cells $X, Y$, there is an adjoint equivalence \mbox{$(-)^*\dashv ((-)^*)^{op}:\cB(X,Y)\rarr\cB(Y,X)^{op}$}, and
	\item for any 1-cell $A:X\rarr Y$, the 1-cell $A^*$ is the right extension of $\top_X^*$ along $A$, such that these extensions are natural in $A$. 
\end{itemize}
\end{defn}

As discussed by \cite{Cockett_Koslowski_Seely_2000} and \cite{Koslowski_2001}, we can equivalently define cyclic $*$-autonomous bicategory by introducing the concept of dualizing 1-cells.

\begin{defn}
Suppose $\cD=\{\bot_X\colon X\rarr X \, \vert \, X \in \cB\}$ is a family of 1-cells in a biclosed bicategory \cB. Given $A\colon X\rarr Y$, applying Definition~\ref{ext}, we get 2-cells 
\[A\,  \to<220>^{\delta_{A,Y}} \, (\bot_Y \lTo A)\rTo \bot_Y \quad\quad {\rm and} \quad \quad A \, 
\to<220>^{\delta_{X,A}} \, \bot_X \lTo (A \rTo \bot_X) \]
\begin{itemize}

\item A family $\mathcal{ D}$ is called {\em dualizing} if the 2-cells $\delta_{X,A}$ and $\delta_{A,Y}$ are invertible, for all $A\colon X\rarr Y$.

\item A dualizing family  $\mathcal{ D}$ is called {\em cyclic} if there are invertible 2-cells $\theta_A\colon \bot_Y \lTo A \xrightarrow{\sim} A \rTo \bot_X$, natural in $A\colon X\rarr Y$, such that  the following diagram commutes
\[\bfig
\Ctriangle/<-``->/<600,300>[\qquad \qquad \bot_X \lTo  (A  \rTo \bot_X)`A`\qquad\qquad(\bot_Y \lTo A) \rTo \bot_Y;\delta_{X,A}``\delta_{A,Y}]
\morphism(600,600)|r|<0,-300>[\qquad \qquad \bot_X \lTo (A \rTo \bot_X)`\qquad \qquad \bot_X \lTo (\bot_Y \lTo  A);\bot_X  \lTo\theta_A]
\morphism(600,300)|r|<0,-300>[\qquad \qquad \bot_X \lTo (\bot_Y \lTo  A)`\qquad\qquad(\bot_Y \lTo A) \rTo \bot_Y;\theta_{\bot_Y \lTo A}] 
\efig\]
In this case, we let $A^\perp=A \rTo \bot_X$.

\end{itemize}
\end{defn}

\begin{prop}
\cB\ is a cyclic $*$-autonomous bicategory if and only if it admits a cyclic dualizing family.
\end{prop}

\begin{ex}\label{Girard_Star} Suppose \cV\ is a $*$-autonomous category with cyclic dualizing object $\bot$, one can show that  $\cD=\{ \perp\colon\star \to<125> \star \}$ is a cyclic dualizing family for its suspension $\cB(V)$, since $A\rTo \bot \cong A^\perp\cong \bot\lTo A$ and $A\cong (A^\perp)^\perp$, for all objects $A\in\cV$. So $\cB(V)$ is a cyclic $*$-autonomous bicategory. \\

\noindent In particular, consider $\cV={\rm Sup}$, the category of suplattices and suplattice homomorphisms (functions that preserve all joins). {\rm Sup} is a $*$-autonomous category with cyclic dualizing object $\Omega^{op}$, with $A\,\lTo\Omega^{op}\cong \Omega^{op}\rTo\, A\cong A^\circ$, where $A^\circ$ denotes the opposite poset of $A$, and $A\cong (A^\circ)^\circ$, for all $A$. Therefore, $\cB({\rm Sup})$ is a  cyclic $*$-autonomous bicategory with cyclic dualizing family $\cD=\{\Omega^{op}\colon\star \rarr \star \}$. 
\end{ex}

It will come as no surprise then that every cyclic $*$-autonomous bicategory is a linear bicategory by the de Morgan equations. It is remarked in \cite{Cockett_Koslowski_Seely_2000} without a detailed proof as it would largely follow the same computation described in \cite{Cockett_Seely_1997}, in the case of LDCs and $*$-autonomous categories. We describe below some of the key details.

\begin{lem}\label{lemma} If \cB\ is a cyclic $*$-autonomous bicategory, then there is an invertible 2-cell $$A^\perp\rTo\,B\xrightarrow{\sim} (B^\perp \ot A^\perp)^\perp \quad\quad {\rm and}\quad\quad B\,\lTo C^\perp\xrightarrow{\sim} (C^\perp \ot B^\perp)^\perp$$ for all $A\colon X\rarr Y$, $B\colon Y\rarr Z$ and $C\colon Z\rarr W$.
\end{lem}

\begin{proof} Suppose $\mathcal{ D}=\{{\bot_X}\colon X\rarr X \}$  is a cyclic dualizing family for \cB. Then composition with $\delta_{B,Z}$ induces the transpose invertible 2-cell 
\[ A^\perp\rTo\,B \xrightarrow{\sim} A^\perp\rTo\,((\bot_Z\,\lTo B)\rTo\,\bot_Z) \]
Since $\bot_Z\,\lTo B \cong  B ^\perp$ and by Proposition \ref{natural}, it follows that $$A^\perp\rTo\,((\bot_Z\,\lTo B)\rTo\,\bot_Z)\xrightarrow{\sim} (B^\perp\ot A^\perp)\rTo\,\bot_Z = (B^\perp\ot A^\perp)^\perp$$ and the desired 2-cell follows. Similarly for the second result.
\end{proof}

While the following result follows from the discussion by \cite{Cockett_Koslowski_Seely_2000} in Section 3.4, we outline the broad strokes of the proof for readers less initiated in categorical linear logic. 

\begin{prop}\cite{Cockett_Koslowski_Seely_2000}\label{Girard} Every cyclic $*$-autonomous bicategory is a linear bicategory.
\end{prop} 

\begin{proof}
Suppose $\cD=\{{\bot_X}\colon X\rarr X \}$ is a cyclic dualizing family for \cB. Given  $A\colon X\rarr Y$ and $B\colon Y\rarr Z$, define $A\op B= (B^\perp\otimes A^\perp)^\perp$. \\

Then $\op$ is associative and has identity 1-cells $\bot_X = \top_X^\perp\colon X\rarr X$:
\begin{align*}
A\op (B\op C) &= A\op (C^\perp\otimes  B^\perp)^\perp \cong  ((C^\perp\otimes  B^\perp)\otimes A^\perp)^\perp  \\
&\cong(C^\perp\otimes  (B^\perp\otimes A^\perp))^\perp \cong    (C^\perp\otimes  (A\op B)^\perp)^\perp \\
&= (A\op B)\op C
\end{align*}
and  $\top_X^\perp\op A  \cong A\cong A\op \top_Y^\perp$.\\

To see that \cB\ is a linear bicategory, we will define the left linear distribution 
\[A\ot(B\op C)\rarr (A\ot B)\op C\] 
Since $A\ot (B\op C)\cong A\ot(B^\perp\rTo\,C)$ and $(A\otimes B)\op C \cong (A\ot B)^\perp\rTo\,C\cong (B^\perp\,\lTo A)\rTo\,C$, by Lemma~\ref{lemma}; it suffices to define a 2-cell 
\[A\ot(B^\perp\rTo\,C)\rarr (B^\perp\,\lTo A)\rTo\,C\] 
or equivalently, its transpose 
\[ (B^\perp\,\lTo A)\ot (A\ot (B^\perp\rTo\, C))\rarr C\] 
For this, we can use the associator and the evaluation maps 
\[(B^\perp\,\lTo A)\ot (A\ot (B^\perp\rTo\,C))\rarr((B^\perp\,\lTo A)\ot A)\ot (B^\perp \rTo\,C)\rarr B^\perp\ot (B^\perp \rTo\, C)\rarr C\]
Similarly, we get a 2-cell $(A\op B)\ot C\rarr A\op (B\ot C)$, as desired. It remains to show the coherence conditions, in particular the ones involving the linear distributions.
\end{proof}

The last section of this article will be a discussion of several examples of non-posetal cyclic $*$-autonomous bicategories, including $\biquant$\ and $\qtld$. \\

Cyclic $*$-autonomous bicategories are unique in that every 1-cell has a canonical cyclic linear adjoint, in the following sense:
\begin{defn}\cite{Cockett_Koslowski_Seely_2000}
A {\em linear adjunction} $A\Ladj B\colon X\rarr Y$ consists of a pair of 1-cells $A\colon X\rarr Y$ and $B\colon Y\rarr X$, equipped with 2-cells $\tau\colon \top_X\rarr A\op B$, known as the unit, an $\gamma\colon B\ot A\rarr \bot_Y$, known as the conunit, satisfying the snake equations. 
\end{defn}

\begin{defn}\cite{Cockett_Koslowski_Seely_2000}
A {\em cyclic linear adjunction} $A\CLadj B$ is a pair of linear adjoints $A\Ladj B$ and $B\Ladj A$.
\end{defn}

\begin{prop}\cite{Cockett_Koslowski_Seely_2000}
Any two right (respectively left) linear adjoints to a 1-cell are isomorphic, in the sense that there is a unique 2-cell mediating the isomorphism. 
\end{prop}

Then, 
\begin{prop}\cite{Cockett_Koslowski_Seely_2000}
Consider a cyclic $*$-autonomous bicategory, then every 1-cell $A\colon X\rarr Y$ has a unique (up to isomorphism) cyclic linear adjoint given by $A^\perp\colon Y\rarr X$.
\end{prop}

\section{\LD-quantales and linear quantaloids} \label{LDquantales_linearquantaloids}

It is immediate that Girard quantaloids are locally posetal cyclic $*$-autonomous bicategories. In particular, the second composition is defined as the dual of the first. An alternate approach to the model theory of linear logic is that of linear bicategories where the tensor and par are taken as primitive. We define what the analogous structure would be for quantaloids below.

\begin{defn}\label{Linear_Quantaloid}
A {\em linear quantaloid} is a locally small category \cQ\ whose hom-sets are complete lattices with binary operations $\ot$ and $\op$, and families of distinguished morphisms $\{\top_a~|~a\in {\rm ob}\cQ\}$ and $\{\bot_a~|~a\in {\rm ob}\cQ\}$ such that
\begin{itemize}
    \item $(\cQ,\ot,\top_a)$ and $(\cQ^{co},\op,\bot_a)$ are quantaloids,
    \item for all $f\colon a\rarr b, g\colon b\rarr c, h\colon c\rarr d\in\cQ$, \[ f\ot(g\op h)\leq(f\ot g)\op h\quad {\rm and}\quad (f\op g)\ot h\leq f\op (g\ot h)\]
\end{itemize}
\end{defn}

\noindent Every Girard quantaloid is a linear quantaloid and we have the following obvious observation. 

\begin{lem}
A linear quantaloid is a linear bicategory.
\end{lem}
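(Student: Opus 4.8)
The plan is to verify directly that a linear quantaloid $\cQ$, as defined in Definition~\ref{Linear_Quantaloid}, satisfies the axioms of a linear bicategory from Section~3. The work is almost entirely bookkeeping: a locally small category is the same data as a collection of $0$-cells (the objects) together with hom-categories $\cQ(a,b)$, which here are complete lattices, hence in particular posets, hence locally posetal. So first I would observe that since each hom-poset has binary meets and joins (indeed all suprema and infima, being a complete lattice), the monotone composition maps $\ot$ and $\op$ automatically upgrade to functors $\cQ(a,b)\times\cQ(b,c)\to\cQ(a,c)$, and the required $2$-cell coherence (associators, unitors, interchange, triangle/pentagon) is trivial because in a poset every diagram of $2$-cells commutes and any two parallel $2$-cells are equal.

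Next I would check the two bicategory structures separately. The first hypothesis says $(\cQ,\ot,\top_a)$ is a quantaloid; by the remark after the definition of quantaloid (a quantaloid is a $\mathbf{CompLat}$-enriched category, equivalently a locally posetal bicategory with sup-preserving composition), this immediately gives a bicategory structure with identity $1$-cells $\top_a$. Dually, $(\cQ^{co},\op,\bot_a)$ being a quantaloid gives a bicategory structure on $\cQ^{co}$ with identities $\bot_a$; reversing $2$-cells back (the ${}^{co}$ only flips the local order, not the direction of $1$-cells) yields the second bicategory structure on $\cQ$ itself, with composition $\op$ and identity $1$-cells $\bot_a$. At this point both $(\cB(X,Y),\ot,\top)$ and $(\cB(X,Y),\op,\bot)$ are bicategories on the same underlying $2$-globular data, as required.

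Then I would produce the linear distributions. For composable $1$-cells $X\xrightarrow{f}Y\xrightarrow{g}Z\xrightarrow{h}W$ (using the paper's convention on composition order), the second bullet of Definition~\ref{Linear_Quantaloid} gives exactly the inequalities $f\ot(g\op h)\le(f\ot g)\op h$ and $(f\op g)\ot h\le f\op(g\ot h)$, which in the locally posetal setting \emph{are} the two required $2$-cells
\[
\delta^L\colon f\ot(g\op h)\Rightarrow(f\ot g)\op h,\qquad
\delta^R\colon(f\op g)\ot h\Rightarrow f\op(g\ot h).
\]
Naturality in each variable is automatic because each is a monotone map of hom-posets (if $f\le f'$, $g\le g'$, $h\le h'$ then both sides of each inequality are monotone, and there is a unique $2$-cell filling any naturality square in a poset). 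The coherence conditions of \cite{CKS} relating these distributions to associators and unitors are again automatic: they are equalities or inequalities of $2$-cells between parallel $1$-cells in a locally posetal bicategory, hence hold vacuously.

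The only point requiring a modicum of care — and the step I expect to be the mildest obstacle — is matching conventions: the direction in which ${}^{co}$ reverses things (local order only, versus ${}^{op}$ which would reverse $1$-cells), and the left/right placement of arguments in $f\ot g$ versus the paper's $\cB(X,Y)\times\cB(Y,Z)\to\cB(X,Z)$ convention, so that the two inequalities in Definition~\ref{Linear_Quantaloid} line up with the two linear distributions $-\ot(-\op-)\Rightarrow(-\ot-)\op-$ and $(-\op-)\ot-\Rightarrow -\op(-\ot-)$ of Section~3 rather than their symmetric mirror images. Once the conventions are pinned down this is immediate, and I would simply remark that everything else is forced by local posetality, justifying the statement's description as an ``obvious observation.''
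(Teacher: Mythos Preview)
Your proposal is correct and matches the paper's own treatment: the paper labels this an ``obvious observation'' and gives no proof, precisely because local posetality makes all naturality and coherence automatic once the two quantaloid structures and the linear-distribution inequalities are in place. Your expansion of the bookkeeping is exactly the intended argument.
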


\begin{defn}
If \cQ\ and \cQ' are linear quantaloids, a {\em linear quantaloid homomorphism} $F=(F_\ot, F_\op)\colon\cQ\rarr\cQ'$ is a linear functor such that $F_\ot$ and $F_\op$ are quantaloid homomorphisms. 
\end{defn}

A main result of the present work will be new examples of linear bicategories which are linear quantaloids. To construct these, we need the definition of the analogous linear structure for quantales. These are the \LD-quantales as defined below. 

\begin{defn}
An {\em \LD-quantale} $(Q, \ot, \top, \op, \bot)$ is a complete lattice $Q$ with operations $\ot$ and $\op$ and elements $\top$ and $\bot$ such that 

\begin{itemize}
\item $(Q,\ot, \top)$ and $(Q^{op},\op,\bot)$ are quantales.
\item for all $a, b, c\in Q$, \[ a\ot(b\op c)\leq(a\ot b)\op c\quad {\rm and}\quad (a\op b)\ot c\leq a\op (b\ot c)\]
\end{itemize}
\end{defn}

Clearly a Girard quantale is a LD-quantale.\\

The notion of an \LD-quantale is not truly a new one. It has previously appeared in some form in the literature, following the introduction of LDCs by \cite{Cockett_Seely_1997}. In particular, it has been considered within the field of algebraic logic, when discussing ordered algebras, wherein linear distributivity was called {\em hemi-distributivity} by \cite{Dunn_Hardegree_2001}. \\

Indeed, within this context, a \LD-quantale refers to a lattice-ordered bimonoid $\langle \mathbb{A}, \wedge, \vee, \cdot, 1, +, 0\rangle$, where all joins and meets are admissible in the multiplicative pomonoid $\mathbb{A}_\cdot$ and in the additive pomonoid $\mathbb{A}_+$ respectively, as defined by \cite{Galatos_Prenosil_2023}. \\

Now, every locale is a quantale and \cite{Rosenthal_1992} remarks that a locale is a Girard quantale if and only if it is a Boolean algebra with $\qzero$ its dualizing element. We can extend this remark to \LD-quantales, but fist we must introduce a slightly less well-known type of lattice.

\begin{defn}\cite{Reyes_Zolfagheri_1996}
\begin{itemize}
    \item A {\em Heyting} algebra is a bounded distributive lattice $\cL$ with an ``implication'' operator $\rarr\,\colon \cL\times\cL\rarr\cL$ with the following property $\forall a,b,c\in \cL$:
    \[ a\leq b\rarr c \iff a\wedge b\leq c\]
    
    \item A {\em co-Heyting} algebra is a bounded distributive lattice $\cL$ with an ``subtraction'' operator $\backslash\,\colon \cL\times\cL\rarr\cL$ with the following property $\forall a,b,c\in \cL$:
    \[ a\backslash b\leq c \iff a\leq b\vee c\]

    \item A {\em bi-Heyting} algebra is a bounded distributive lattice that is both a Heyting and a co-Heyting algebra.
\end{itemize}
\end{defn}

Then:

\begin{prop}
A locale is a \LD-quantale $(\cL, \wedge, \qone, \vee, \qzero)$ if and only if it is a complete bi-Heyting algebra.
\end{prop}
\begin{proof}
Note that a complete Heyting algebra is the same notion as a locale. Moreover, a locale is a \LD-quantale if and only if the opposite infinitary law holds,
\[ (\bigwedge_{i\in I} a_i)\vee b = \bigwedge_{i\in I} (a_i\vee b)\]
This is equivalent to requiring a right adjoint to $(-)\vee a\colon \cL^{op}\times\cL^{op}\rarr\cL^{op}$, in other words $\cL$ has a subtraction operation and is a co-Heyting algebra. 
\end{proof}

\begin{ex}
\begin{enumerate}
	\item $\sREL(X)$, the poset of relations on a set $X$, is a Girard quantale with cyclic dualizing element $\Delta_X^c$, see Proposition 1.2 in \cite{Rosenthal_1992}, and therefore is a LD-quantale.

	\item Consider the following locales which are \LD-quantales.

	\begin{itemize}
		\item Two-chain $\Omega$ is a Girard quantale, and therefore a LD-quantale.
		\item Three-chain $3$ is the smallest locale, which is not Boolean as the law of excluded middle doesn't hold: \[ 1/2 \vee (0\,\lTo 1/2) = 1/2 \vee 0 = 1/2 \neq 1 \]
		As such it is not a Girard quantale, but it is nonetheless a LD-quantale as it is a bi-Heyting algebra.
		\item ${\sf P_{max}}$ is a LD-quantale, as it is a bi-Heyting algebra, with $\op = {\rm min}$ and $\bot = \infty$, but it is not a Girard quantale as $\infty$ is not a dualizing element:
\[ {\rm if}\quad a\neq\infty, \quad a\rTo\,\infty= \infty \implies (a\rTo\,\infty)\rTo\,\infty = 0 \]
            \item \cite{Reyes_Zolfagheri_1996} demonstrate that given an oriented irreflexive multigraph, the lattice of its subgraphs is a bi-Heyting algebra, and therefore an \LD-quantale.
            \item \cite{Borceaux_Bourn_Johnstone_2006} defined the notion of a bi-Heyting topos to be a topos such that the lattice of its subobjects is precisely a bi-Heyting algebra. An important example is the topos of presheafs $[\cC^{op}, Set]$ for any small category $\cC$. As such, its lattice of subobjects is a \LD-quantale.
	\end{itemize}
	
	\item Consider the unit interval with multiplication $([0,1],\cdot, 1)$. It becomes a LD-quantale $([0,1],\cdot,\op)$ when considering truncated addition $a\op b ={\sf min}(a+b, 1)$ for its par structure with unit $\bot = 0$. $([0,1]^{op}, \op, 0)$ is a quantale since \[ \bigwedge_{\alpha} {\sf min}(a+b_\alpha, 1) = {\sf min}(a+\bigwedge_\alpha b_\alpha, 1) \] Linear distributivities hold as \[ a\cdot(b+c) = (a\cdot b) + (a\cdot c) \leq (a\cdot b)+c \quad\quad \forall a\in[0,1] \]
	It is not however a Girard quantale as $0$ is not a dualizing element: \[ {\rm if}\quad a\neq 0, \quad a\rTo\,0= 0 \implies (a\rTo\,0)\rTo\,0 = 1 \]

	Since $([0,1],\cdot, 1)\,\cong\, {\sf P_{+}}$, we can use the isomorphims to define a par structure on ${\sf P_{+}}$: \mbox{$a\op b = {\sf max}(-{\sf ln}({\sf exp}(-a)+{\sf exp}(-b)), 0)$} and $\bot = \infty$. Then, ${\sf P_{+}} = ([0,\infty]^{op}, +, \op)$ is a LD-quantale isomorphic to $([0,1],\cdot,\op)$.
	
\end{enumerate}
\end{ex}

\begin{rem}
Given a locale $L$, its booleanization ${\sf Bool}(L)=\{a\in L\,|\, (a\rTo\,\qzero)\rTo\,\qzero = a\}$ is a complete Boolean algebra by \cite{Banaschewski_Pultr_1996}. Therefore, viewing any locale as a LD-quantale, there is always a sub-locale ${\sf Bool}(L)$ which is a Girard quantale. 
\end{rem}

The following is another example of an LD-quantale which requires a little more discussion.

\subsection{Shift monoids}

\cite{Cockett_Seely_1997} introduce the notion of a {\it shift monoid}. These provide examples 
of discrete linearly distributive categories which are not $*$-autonomous. 

\begin{defn}
\begin{itemize}

\item A {\em shift monoid} consists of a 4-tuple $\cM=(M,+,\top,a)$ where $(M,+,\top)$ is a commutative monoid and $a$ is an 
invertible element of $M$. 
\item If \cM\ is a shift monoid, define a second multiplication by
\[x\cdot y=x+y-a\]
Then this is a second monoid structure on $M$ with unit given by $a$.
\end{itemize}
\end{defn}

It is trivial to see that $x\cdot(y+z)=(x\cdot y)+z$ and so every shift monoid is a discrete linearly distributive category $(M,+,\cdot)$. We modify the notion of shift monoid as follows in order to construct \LD-quantales. Recall that a commutative monoid $M$ is {\it cancellative} if for all $a,b,c\in\ M$, one has $$a+b=a+c\Rightarrow b=c$$

\begin{prop}
\begin{itemize}
\item Let $(M,+,\top)$ be a cancellative commutative monoid. 
We view $M$ as a discrete poset and then add top and bottom
elements which we denote \qone\ and \qzero. Denote this set as $M^+$, We then extend the addition on $M$:

\[\qone+b=\qone\,\mbox{if $b\in M$ or $b=\qone$} \,\,\,\,\,\,\,\,\,\,\,\, \qzero+b=\qzero\mbox{for all $b\in M^+$}\]
\noindent Then $(M^+,+,\top)$ is a commutative quantale.

\item Let $\cM = (M,+,\top, a)$ be furthermore a cancellative commutative shift monoid, then extend the second operation in the dual way:
\[\qzero\cdot b=\qzero\mbox{    if $b\in M$ or $b=\qzero$} \,\,\,\,\,\,\,\,\,\,\,\, \qone\cdot b=\qone\mbox{     for all $b\in M^+$}\]

Then $(M^+, +,\cdot)$ is a \LD-quantale.
\end{itemize}
\end{prop}

\begin{rem} The cancellative property is needed to ensure that $+$ preserves all suprema in $M^+$.
\end{rem}

\section{\QRel\ as a linear bicategory}\label{QRel}

We now demonstrate that if $Q$ is a Girard quantale, or more generally an \LD-quantale, \QRel\ determines a linear quantaloid, providing new examples of linear bicategories.

\begin{prop}  
\QRel\ is a Girard quantaloid with cyclic dualizing family $\cD=\{\bot_X\colon X\tov X\}$ if and only if $Q$ is a Girard quantale with cyclic dualizing element $\bot$, where $\bot = \bot_1(*,*)$, $\bot_1\colon 1\tov1$ being the constant map between singleton sets $1=\{*\}$, and

\[\bot_X(x,x')=\begin{cases}\bot  & x=x' \\ \qone & x\ne x'
\end{cases}\]
\vskip -.5in \hfill $(*)$ \\
\end{prop}

\vskip .2in 

\begin{proof} Suppose \QRel\ is a Girard quantaloid. Given  $a\in Q$, let $R_a\colon 1\tov1$ denote the $a$-valued constant relation. Since $R_a \rTo \bot_1=\bot_1 \lTo  R_a $ and $R_a^{\perp\perp}=R_a$, it follows that $\bot=\bot_1(*,*)$ is a cyclic dualizing element for $Q$, and so $Q$ is a Girard quantale. \\

\noindent Consider a set $X$. Let $x'\in X$ and define $Q$-relation $R\colon 1\tov X$ by 

\[R(*,x)=\begin{cases} \top & x=x' \cr \qone & x\ne x'\end{cases}\]

\noindent Now $(\bot_X  \lTo  R)(x,*)=\displaystyle\bigwedge_{\bar x} \bot_X(x,\bar x)\lTo  R(*,{\bar x})= \bot_X(x,x')$ and 
\[(R \rTo  \bot_1)(x,*)=R(*,x) \rTo \bot= \begin{cases}  \bot& x=x' \\ \qone & x\ne x'\end{cases}\] 
 
\noindent since $\top\rTo\bot=\bot$. As \cD is cyclic, $\bot_X\lTo  r = r \rTo \bot_1$ and consequently, (*) holds. \\

Suppose $Q$ is a Girard quantale with cyclic dualizing element $\bot$. Define a family of $Q$-relations \cD by $(*)$. Consider $R\colon X\tov Y$, then $$(R \rTo\bot_X)(y,x) =  \displaystyle \bigwedge_{\bar x} R(\bar x,y)\rTo \bot_X(\bar x,x)=R(x,y)\rTo\bot = R(x,y)^\perp$$  $$(\bot_Y  \lTo  r)(y,x) = \displaystyle \bigwedge_{\bar y} d_Y(y,\bar y)  \circ\!\!-\!\!\!\!-  r(x,
\bar y)= \bot \lTo  R(x,y) = R(x,y)^\perp$$  and so \cD is a dualizing family for \QRel\ as $R\rTo\bot_X = \bot_Y\lTo R$. \cD being cyclic follows immediately and as such \QRel\ is a Girard quantaloid.
\end{proof}

Consequently, if $Q$ is a Girard quantale, there is a second categorical structure on \QRel\ determining a linear bicategory. Indeed its second composition is obtained as the de Morgan dual of its standard composition
\[ R\op S(x,z) = (\bigvee_{y\in Y} S(y,z)^\perp \ot R(x,y)^\perp)^\perp = \bigwedge_{y\in Y} (S(y,z)^\perp \ot R(x,y)^\perp)^\perp = \bigwedge_{y\in Y}R(x,y)\op S(y,z)\]
with identities $\bot_X$. \\

This generalizes further. Suppose $(Q,\ot,\top)$ and $(Q^{op},\op,\bot)$ are quantales. Two notions of composition \[\ot,\op\colon \cB(X,Y)\times\cB(Y,Z)\lrarr\cB(X,Z)\]

\noindent are defined as follows: given $R\colon X\tov Y$ and $S\colon Y\tov Z$,
\[R\ot S(x,z)=\bigvee_{y\in Y}(R(x,y)\ot S(y,z)) \mbox{\,\,\,\,\,\,\,\,\,\,\,\,\,\,\,\,\,\,\,\,}R\op S(x,z)=\bigwedge_{y\in 
Y}(R(x,y)\op S(y,z))\]

\noindent The identity 1-cells are given by:

\[\top_X(x,x')=\begin{cases}\top& \mbox{if } x=x'  \\ \qzero &\mbox{if } x\neq x' \end{cases}\mbox{\,\,\,\,\,\,\,\,\,\,\,\,\,\,\,\,\,\,\,\,\,\,\,}
\bot_X(x,x')=\begin{cases}\bot& \mbox{if } x=x' \\ \qone &\mbox{if } x\neq x'  \end{cases}\] and we get:
\begin{thm}\label{LDQ} $Q$ is an \LD-quantale if and only if \QRel\ is a linear quantaloid. 
\end{thm}

\begin{proof} By Lemma \ref{QRel_quantaloid}, $(\QRel, \ot,\top_X)$ and $(Q^{op}$-$\sREL, \op, \bot_X)\,\cong\, (\QRel^{co}, \op,\bot_X)$ are quantaloids as $(Q,\ot,\top)$ and $(Q^{op},\op,\bot)$ are quantales, inheriting their structure from $Q$ point-wise. \\

Suppose $($Q$,\ot, \op)$  is an \LD-quantale. Given \[ W\stackrel{R}{\tov}X\stackrel{S}{\tov}Y\stackrel{T}{\tov}Z\] we have $R\otimes(S\op T) \le (R\otimes S)\op T$ if and only if, for all $w,z$, \[ \bigvee_x 
[R(w,x)\ot(S\op T)(x,z)] \le \bigwedge_y [(R\otimes S)(w,y)\op T(y,z)] \]
if and only if  $R(w,x)\ot(S\op T)(x,z) \le (R\otimes S)(w,y)\op T(y,z)$, for all $w,x,y,z $. But,
\begin{eqnarray*}
R(w,x)\ot(S\op T)(x,z) 
& = &R(w,x)\ot \bigwedge_ y [S(x,y) \op T(y,z)] \\
& \le & R(w,x)\ot[S(x,y) \op T(y,z)] \\
& \le & [R(w,x)\ot S(x,y)] \op T(y,z) \\
& \le &\bigvee_x[R(w,x)\ot S(x,y)] \op T(y,z) \\
& \le &(R\otimes S)(w,y)\op T(y,z) 
\end{eqnarray*}
\noindent The other inequality follows similarly and we conclude that \QRel\ is a linear quantaloid. 

Conversely, suppose \QRel\ is a linear quantaloid. Then $a,b,c$ in $Q$ induces 
$1\stackrel{R_a}{\tov}1\stackrel{R_b}{\tov}1\stackrel{R_c}{\tov}1$ in \QRel, and $R_a\otimes(R_b\op R_c) \le (R_a\otimes R_b)\op R_c$ implies $a\ot(b\op c)\le (a\ot b)\op c$. Thus $Q$ is an \LD-quantale.
\end{proof}

\QRel\ is a linear quantaloid for all locales $Q$ as they are LD-quantales. 
\begin{ex} 
\begin{enumerate}

\item \mbox{$2$-\sREL}$\,\cong\,\sREL$ is a Girard quantaloid, with its par composition given by de Morgan duality, the same as previously introduced in the preliminaries section.
	
\item \mbox{$3$-\sREL} is a linear quantaloid, which is not a Girard quantaloid, as $3$ is a complete bi-Heyting algebra, but not a Boolean algebra. 
	
\item \mbox{${\sf P_{max}}$-\sREL}, the quantaloid of sets and ``extended'' distance relations, is a linear quantaloid, which is not Girard, with its par composition defined, for $D_1\colon X\tov Y$ and $D_2\colon Y\tov X$ by \[ (D_1 \op  D_2)(x,z) = \bigvee_{y\in Y} {\sf min}(D_1(x,y), D_2(y,z)) \]
and par identities given by \[ \bot_X(x,x') = \begin{cases}\infty& \mbox{if } x=x' \\ 0 &\mbox{if } x\neq x'  \end{cases}\]

\item \mbox{$[0,1]$-\sREL} is another linear quantaloid, which is not Girard, of sets and relations, with its par composition defined, for $D_1\colon X\tov Y$ and $D_2\colon Y\tov X$ by \[ (D_1 \op D_2)(x,z) = \bigwedge_{y\in Y}{\sf min}(D_1(x,y)+D_2(y,z), 1) \]
and par identities given by \[ \bot_X(x,x') = \begin{cases}0& \mbox{if } x=x' \\ 1 &\mbox{if } x\neq x'  \end{cases}\]
	
Alternatively, \mbox{${\sf P_{+}}$-\sREL} with its par composition defined, for $D_1\colon X\tov Y$ and $D_2\colon Y\tov X$ by \[ (D_1 \op D_2)(x,z) = \bigvee_{y\in Y}{\sf max}(-{\sf ln}({\sf exp}(-D_1(x,y))+{\sf exp}(-D_2(y,z))), 0) \]
and par identities given by \[ \bot_X(x,x') = \begin{cases}\infty& \mbox{if } x=x' \\ 0 &\mbox{if } x\neq x'  \end{cases}\]

\end{enumerate}
\end{ex}

\section{Enriched in a linear quantaloid}\label{Enriched_linearquantaloid}

\cite{Rosenthal_1992} demonstrated that the definition of a Girard quantaloid was closed under various constructions, in particular if \cQ\ is a Girard quantaloid, then \QMod\ is one as well. This can be easily extended into a necessary and sufficient condition.

\begin{prop}
\QMod\ is a Girard quantaloid with cyclic dualizing family $\{\bott_M\colon M\tov M\}$ if and only if \cQ\ is a Girard quantaloid with cyclic dualizing family  $\{\bot_a\colon a\rarr a\}$, where \mbox{$\bot_a=\bott_{M_a}(*, *)$}, $M_a=(1,\rho_a)$ being the \cQ-category defined by $\rho_a(*)=a$ and $M_a(*,*)=\top_a$, and
$$\bott_M(x,x')=M(x',x)\rTo \bot_{\rho(x')}$$
\end{prop}

\begin{proof}
Suppose \QMod\ is a Girard quantaloid with cyclic dualizing family $\{\bott_M\colon M\tov M\}$. Given an object $a$ in \cQ, define the  \cQ-category $M_a=(1,\rho_a)$ as indicated above. Define a family of morphisms $\cD=\{\bot_a\colon a\rarr a\}$ in \cQ\ by $\bot_a=\bott_{M_a}(*, *)$. \\

\noindent Given a morphism $f\colon a\rarr b$ in \cQ, consider its image $\Theta_f\colon M_{a}\tov M_{b}$ under the embedding of \cQ\ into \QMod. Then,
\[ f\rTo \bot_a = \Theta_f(*,*)\rTo~ \bott_{M_a}(*,*) = \bigwedge_{x\in \{*\}} \Theta_f(x,*)\rTo~ \bott_{M_a}(x,*) = (\Theta_f \rTo~ \bott_{M_a})(*, *) = \Theta_f^\perp(*,*) \]
\[ \bot_b \lTo f = \bott_{M_b}(*,*) ~\lTo \Theta_f (*,*) = \bigwedge_{x\in \{*\}}\bott_{M_b}(*,x)~\lTo \Theta_f(*,x) = (\bott_{M_b}~\lTo \Theta_f)(*, *) = \Theta_f^\perp(*,*) \]
implying \cD is a cyclic family of morphisms in \cQ\ and \cD being dualizing follows similarly. \\

The sufficiency proof is shown in Theorem 3.1 in \cite{Rosenthal_1992}. 
\end{proof}

Notice that if \cQ\ is a Girard quantaloid, then for each \cQ-category $M=(X,\rho)$ and $(x,x')\in X$, there is a morphism \mbox{$M(x',x)^\perp\colon\rho(x)\rightarrow\rho(x')$} satisfying $\forall x, x', x''\in X$
\[ M(x,x)^\bot\leq\bot_{\rho(x)} \qquad M(x,x'')^\bot\leq M(x',x'')^\bot\op M(x,x')^\bot\]
Therefore the map $M^\perp\colon (x,x')\mapsto M(x',x)^\perp$, coupled with $(X,\rho)$ is a $\cQ^{co}$-category, where $\cQ^{co}$ is the quantaloid with the de Morgan dual composition $\op$ and identities $\bot_X$. \\

\noindent $M^\perp$ is moreover a \cQ-module $M\tov M$: $\forall x, x', x''\in X$
\[ M(x',x)^\bot\ot M(x',x'')\leq M(x'',x)^\bot \qquad M(x,x')\ot M(x'',x')^\bot\leq M(x'',x)^\bot \]

\noindent While $M$ becomes a $\cQ^{co}$-module $M^\perp\tov M^\perp$: $\forall x, x', x''\in X$
\[ M(x,x'')\leq M(x',x)^\bot\op M(x',x'') \qquad M(x,x'')\leq M(x,x')\op M(x'',x')^\bot \]

\noindent All together, this entails that $(M, M^\bot)\colon X\rarr\cQ$ is a linear functor, when $X$ is viewed as a degenerate indiscrete linear bicategory. \\
    
\noindent Each \cQ-module $\Theta\colon M\tov N$ interacts coherently with this structure and it determines a $\cQ^{co}$-module $M^\perp\tov N^\perp$: \mbox{$\forall x, x'\in X, y, y'\in Y$}
\[\Theta(x,y)\leq M(x',x)^\bot\op\Theta(x',y) \qquad \Theta(x,y)\leq \Theta(x,y')\op N(y,y')^\bot\]
    
\noindent As with every Girard quantaloid, we can define a second composition on \QMod. Given \mbox{$M\stackrel{\Theta}{\tov}N \stackrel{\Pi}{\tov}P$}, $\Theta\ot\Pi\colon M\tov P$ and $\Theta\op\Pi\colon M\tov P$ are defined by $$(\Theta\ot\Pi)(x,z)=\bigvee_{y\in Y}\Theta(x,y)\ot\Pi(y,z)\quad{\rm and}\quad(\Theta\op\Pi)(x,z)=\bigwedge_{y\in Y}\Theta(x,y)\op\Pi(y,z)$$
    
\noindent The identities 1-cells for  $\ot$ and $\op$ are given by $\ttop_M$ and $\bott_M^\bot$ respectively, where 
\[\ttop_M(x,x')= M(x,x')\quad{\rm and}\quad \bott_M(x,x')=M(x',x)^\bot\]

The above discussion motivates the following new definitions which generalize \QMod\ to the case where \cQ\ is a linear quantaloid as follows.\\

\noindent Let $(\cQ, \ot, \top_a)$ and $(\cQ^{co},\op,\bot_a)$ be quantaloids. 
\begin{defn}
A {\em linear \cQ-category} is a pair $M = (X, \rho)$ where:
\begin{itemize}
	\item X is a set.
	
	\item $\rho$ is a function $X\rightarrow ob\cQ$,
	
	\item there is a function, called the $\ot${\em-enrichment}, assigning a morphism $M_\ot(x,x')\colon\rho(x)\rightarrow\rho(x')$ in \cQ\ to each pair $(x,x')\in X\times X$ such that $\forall x, x', x''\in X$
	\[ \top_{\rho(x)}\leq M_\ot(x,x) \qquad M_\ot(x,x')\ot M_\ot(x',x'')\leq M_\ot(x,x'')\]
	
	\item there is a function, called the $\op${\em-enrichment}, assigning a morphism $M_\op(x,x')\colon\rho(x) \rightarrow\rho(x')$ in \cQ\ to each pair $(x,x')\in X\times X$ such that $\forall x, x', x''\in X$
	\[ M_\op(x,x)\leq\bot_{\rho(x)} \qquad M_\op(x,x'')\leq M_\op(x,x')\op M_\op(x',x'') \]
	
	satisfying the following inequalities $\forall x, x', x''\in X$,
	\[ M_\ot(x,x'')\leq M_\op(x,x')\op M_\ot(x',x'') \qquad M_\ot(x,x'')\leq M_\ot(x,x')\op M_\op(x',x'')\]
	\[ M_\ot(x,x')\ot M_\op(x',x'')\leq M_\op(x,x'') \qquad M_\op(x,x')\ot M_\ot(x',x'')\leq M_\op(x,x'') \]    
\end{itemize}

More succinctly, if \cQ\ is a linear quantaloid, a linear \cQ-category is a linear functor from $X$ to \cQ, where $X$ is viewed as a degenerate indiscrete linear bicategory.
\end{defn}

Note that given a linear \cQ-category $M=(X,\rho)$, $M_\ot = (X,\rho)$ is a $\cQ$-category and $M_\op = (X,\rho)$ is a $\cQ^{co}$-category. Moreover, the $\ot$-enrichment and $\op$-enrichment together assign cyclic linear adjoints in \cQ\ as follows.
\begin{prop}
Consider a linear \cQ-category $M=(X,\rho)$ and a pair $(x,x')\in X\times X$, then  $\ot$-enrichment and $\op$-enrichments provide a cyclic linear adjunctions $M_\ot(x,x')\CLadj M_\op(x',x)\colon \rho(x)\rarr \rho(x')$.
\end{prop}
\begin{proof}
To show $M_\ot(x,x')\Ladj M_\op(x',x)$, we need only provide the unit and co-unit 2-cells, which are inequalities in this context.
\[ \top_{\rho(x)}\leq M_\ot(x,x) \leq M_\ot(x,x')\op M_\op(x',x)\]
\[ M_\op(x',x)\ot M_\ot(x,x')\leq M_\op(x',x') \leq \bot_{\rho(x')}\]
Similarly, $M_\op(x',x)\Ladj M_\ot(x,x')$. 
\end{proof}

\begin{defn}
Consider linear \cQ-categories $M=(X, \rho_M)$ and $N=(Y, \rho_N)$. A {\em linear \cQ-module} $\Theta\colon M\tov N$ consists of an assignment of a morphism $\Theta(x,y)\colon \rho_M(x)\rightarrow\rho_N(y)$ to every pair $(x,y)\in X\times Y$ such that $\forall x, x'\in X, y, y'\in Y,$
\[ \Theta(x,y) \ot N_\ot(y,y')\leq \Theta(x,y') \qquad M_\ot(x,x')\ot\Theta(x',y) \leq \Theta(x,y) \]
\[ \Theta(x,y)\leq M_\op(x,x')\op\Theta(x',y) \qquad \Theta(x,y)\leq\Theta(x,y')\op N_\op(y',y)\]
\end{defn}

The above definition of a linear \cQ-module can be simplified, as the four inequalities are not independent: the top two imply the bottom two, and vice versa. 
\begin{prop}\label{prop:linear_Q_module}
Consider linear \cQ-categories $M=(X, \rho_M)$ and $N=(Y, \rho_N)$, then $\Theta\colon M\tov N$ is a {\em linear \cQ-module} if one of the following conditions holds:
\begin{enumerate}
    \item $\Theta$ is a \cQ-module $M_\ot\tov N_\ot$, and
    \item $\Theta$ is a $\cQ^{co}$-module $M_\op\tov N_\op$.
\end{enumerate}
\end{prop}
\begin{proof}
Suppose $\Theta$ is a \cQ-module $M_\ot\tov N_\ot$, in other words $\Theta(x,y) \ot N_\ot(y,y')\leq \Theta(x,y')$ and $M_\ot(x,x')\ot\Theta(x',y) \leq \Theta(x,y)$ holds  $\forall x, x'\in X, y, y'\in Y$. Then,
\begin{align*}
\Theta(x,y) &= \Theta(x,y)\ot \top_{\rho_N(y)}\leq  \Theta(x,y)\ot N_\ot(y,y) \leq \Theta(x,y)\ot (N_\ot(y,y')\op N_\op(y',y)) \\
&\leq (\Theta(x,y)\ot N_\ot(y,y'))\op N_\op(y',y) \leq \Theta(x,y')\op N_\op(y',y) \\ \\
\Theta(x,y)& = \top_{\rho_X(x)} \ot \Theta(x,y) \leq M_\ot(x,x)\ot \Theta(x,y) \leq (M_\op(x,x')\op M_\ot(x',x))\ot \Theta(x,y) \\
&\leq M_\op(x,x')\op (M_\ot(x',x)\ot \Theta(x,y)) \leq M_\op(x,x')\op \Theta(x',y)
\end{align*}
The other direction follows similarly. 
\end{proof}

\begin{rem}
It was shown by Cockett, Koslowski and Seely that representable poly-bicategories are linear bicategories and poly-functors between them are linear functors. When viewing linear \cQ-categories as a poly-functors between representable poly-bicategories, a linear \cQ-module is a {\it poly-module} between the poly-functors, as defined in Def 4.1 \cite{Cockett_Koslowski_Seely_2003}.
\end{rem}

We can now define bicategory structures attached to the above data. 

\begin{defn}
Define \QMod\ to consist of the following data:
\begin{itemize}
	\item 0-cells are linear \cQ-categories
	
	\item given a pair of \cQ-categories $M$ and $N$, there is a category with 1-cells the linear \cQ-modules $M\tov N$ and 2-cells the point-wise inequalities, i.e.,
	\[\Theta\leq\Pi\Leftrightarrow~\forall (x,y)\in X\times Y, \quad\Theta(x,y)\leq\Pi(x,y)\]
	
	\item given \cQ-categories $M$, $N$ and $P$, there are two composition functors $\ot, \op$ defined for $\Theta\colon M\tov N$ and $\Pi\colon N\tov P$ by
	\[(\Theta\ot\Pi)(x,z)=\bigvee_{y\in Y}\Theta(x,y)\ot\Pi(y,z)\quad\quad{\rm and}\quad\quad (\Theta\op\Pi)(x,z)=\bigwedge_{y\in Y}\Theta(x,y)\op\Pi(y,z)\]
	
	\item given every \cQ-category $M$, there are identity 1-cells $\ttop_M, \bott_M\colon M\tov M$ defined by 
	\[\ttop_{M}(x,x')=M_\ot(x,x')\quad\quad{\rm and}\quad\quad\bott_{M}(x,x')=M_\op(x,x')\]
\end{itemize}
\end{defn}

And we can show:

\begin{thm}\label{QMod_linear}
$(\QMod, \ot, \op)$ is a linear quantaloid if and only if $(\cQ,\ot,\op)$ is a linear quantaloid. 
\end{thm}

This will be an immediate consequences of similar results for linear \cQ-matrices and linear monads in \cQ, so we will delay proving the above theorem. \\

Another construction that was shown to be Girard by \cite{Rosenthal_1992} is the quantaloid of \cQ-matrices:

\begin{prop}
\MatrQ\ is a Girard quantaloid with cyclic dualizing family $\{\bott_{(X,\gamma)}\colon (X,\gamma)\tov (X,\gamma)\}$ if and only if \cQ\ is a Girard quantaloid with cyclic dualizing family $\{\bot_a\colon a\rarr a\}$, where \mbox{$\bot_a={\bott_{(1,\gamma_a)}}_{*,*}$}, $(1,\gamma_a)$ consisting of the singleton set $1$ and the function $\gamma_a\colon *\mapsto a$, and
\[{\bott_{(X,\gamma)}}_{x,x'}=\begin{cases}\bot_{\gamma(x)}  & x=x' \\ \qone_{\gamma(x),\gamma(x')} & x\ne x'
\end{cases}\]
\vskip -.5in
\end{prop}
\begin{proof}
The forward direction follows similarly to the proof in the case of \QMod, while the backwards direction proved by Theorem 3.2 in \cite{Rosenthal_1992}. 
\end{proof}

If $(\cQ, \ot, \top_a)$ and $(\cQ^{co},\op,\bot_a)$ are quantaloids, then \MatrQ\ inherits a second bicategorical structure. The two notions of composition are defined as follows: given \cQ-matrices \mbox{$(X, \gamma)\stackrel{r}{\tov}(Y,\phi)\stackrel{s}{\tov}(Z,\chi)$}, $r\ot s, r\op s\colon(X,\gamma)\tov (Z,\chi)$ are defined by 
\[ (r\ot s)_{x,z}=\bigvee_{y\in Y}r_{x,y}\ot s_{y,z}\quad\quad{\rm and}\quad\quad (r\op s)_{x,z}=\bigwedge_{y\in Y}r_{x,y}\op s_{y,z} \]
Identity 1-cells $\ttop_{(X,\gamma)}, \bott_{(X,\gamma)}\colon (X,\gamma)\tov (X,\gamma)$ are defined by 
\[{\ttop_{(X,\gamma)}}_{x,x'}=\begin{cases}\top_{\gamma(x)}& \mbox{if } x=x' \\  \qzero_{\gamma(x),\phi(x')} &\mbox{if } x\neq x' \end{cases}
\quad\quad{\rm and}\quad\quad {\bott_{(X,\gamma)}}_{x,x'}=\begin{cases} \bot_{\gamma(x)}& \mbox{if } x=x' \\  \qone_{\gamma(x),\phi(x')} &\mbox{if } x\neq x' \end{cases}\]

\begin{thm}
\MatrQ\ is a linear quantaloid if and only if \cQ\ is a linear quantaloid.
\end{thm}
\begin{proof}
The proof is identical to that of Theorem \ref{LDQ}, replacing objects in a quantale $Q$ by morphisms in \cQ.
\end{proof}

\begin{rem} As in the case of ordinary quantales, it is immediate that for an LD-quantale $Q$, the linear quantaloid ${\sf Matr}\mathcal{ B}(Q)$ is isomorphic to $\QRel$. 
\end{rem}

Finally, as one might expect, taking monads and modules in a Girard quantaloid remains Girard.

\begin{prop}
\MonQ\ is a Girard quantaloid with cyclic dualizing family $\{\bott_{(a,m)}\colon (a,m)\tov (a,m)\}$ if and only if \cQ\ is a Girard quantaloid with cyclic dualizing family $\{\bot_a\colon a\rarr a\}$, where $\bot_a = \bott_{(a,\top_a)}$, $(a,\top_a)$ being the trivial monad on $a$, and $$\bott_{(a,m)} = m\rTo \bot_a$$
\end{prop}
\begin{proof}
The forward direction is immediate from the trivial monad embedding of \cQ\ into \MonQ, while the backwards direction is proven by Theorem 3.3 in \cite{Rosenthal_1992}.
\end{proof}

As in the case of \QMod, this can be generalized to the linear setting.\\

Let $(\cQ, \ot, \top_a)$ and $(\cQ^{co},\op,\bot_a)$  be quantaloids.
\begin{defn}\cite{Cockett_Koslowski_Seely_2000}
A {\em linear monad} $(a,m)=(a,m_\ot,m_\op)$ in \cQ\ is a pair of compatible $\ot$-monad $(a, m_\ot)$ and $\op$-comonad $(a, m_\op)$, i.e., it consists of 
\begin{itemize}
	\item an object $a$
	\item a morphism $m_\ot:a\rightarrow a$ such that \[ \top_a\leq m_\ot \quad\quad {\rm and}\quad\quad m_\ot \ot m_\ot \leq m_\ot\]
	\item a morphism $m_\op:a\rightarrow a$ such that \[m_\op\leq\bot_a \quad\quad {\rm and}\quad\quad m_\op\leq m_\op \op m_\op\] 

satisfying the following inequalities:
\[ m_\ot \leq m_\op \op m_\ot \qquad m_\ot \leq m_\ot \op m_\op\]
\[ m_\ot \ot m_\op \leq m_\op \qquad m_\op \ot m_\ot \leq m_\op\]
\end{itemize}    

\end{defn}

\begin{defn}
Let $(a,m)$ and $(b, n)$ be linear monads in \cQ. A {\em linear $(m,n)$-module} (or monad module) $f\colon(a,m)\tov(b,n)$ is a morphism $f\colon a\rarr b$ in \cQ\ which is $\ot$-monad module $f\colon(a,m_\ot)\tov (b,n_\ot)$ and a $\op$-comonad module $f\colon (a,m_\op)\tov(b,n_\op)$, i.e., satisfies the following inequalities:
\[ f \ot n_\ot\leq f \qquad m_\ot\ot f\leq f \]
\[f \leq m_\op\op f \qquad f \leq f \op n_\op\]
\end{defn}

For the same reasons as in the case of linear \cQ-modules, the $\ot$ and $\op$ monad maps are cyclic linear adjoints and the above definition can be simplified. A morphism $f\colon a\rarr b$ in \cQ\ is $\ot$-monad module $f\colon(a,m_\ot)\tov (b,n_\ot)$ if and only if it is a $\op$-comonad module $f\colon (a,m_\op)\tov(b,n_\op)$.

\begin{defn}
Define \MonQ\ to consist of the following data:
\begin{itemize}
	\item 0-cells are linear monads in \cQ
	
	\item given a pair of linear monads $(a,m)$ and $(b,n)$, there is a category with 1-cells the linear $(m,n)$-modules and 2-cells are inherited from \cQ
	
	\item given linear monads $(a,m)$, $(b,n)$ and $(c,p)$, there are two composition functors $\ot, \op$ which are inherited from \cQ
	
	\item given every linear monad $(a,m)$, there are identity 1-cells $\ttop_{(a,m)}, \bott_{(a,m)}\colon (a,m)\tov (a,m)$ defined by 
	\[\ttop_{(a,m)}=m_\ot \quad\quad{\rm and}\quad\quad \bott_{(a,m)}=m_\op \]
\end{itemize}
\end{defn}

\begin{lem}
If \cQ\ is a linear quantaloid, then $(\MonQ, \ot,\ttop_{(a,m)})$ and $(\MonQ^{co},\op,\bott_{(a,m)})$ are quantaloids.
\end{lem}
\begin{proof}
 $(\MonQ, \ot,\ttop_{(a,m)})$ is a category with a well-defined composition $\ot$: given linear monad modules $f\colon(a,m)\tov(b,n)$ and $g\colon(b,n)\tov(c,p)$, it is immediate that $f\ot g\colon a\rarr c$ is a $\ot$-monad module since $f, g$ are $\ot$-monad modules and, by linear distributivity, $f\ot g\colon a\rarr c$ is a $\op$-comonad module as follows.
 \[ f\ot g\leq (m_\op \op f)\ot g \leq m_\op \op (f\ot g) \quad\quad{\rm and}\quad\quad f\ot g\leq f\ot(g\op p_\op)\leq (f\ot g)\op p_\op \]
 as $f, g$ are $\op$-comonad modules.
Identities $\ttop_{(a,m)}\colon (a,m)\tov (a,m)$ are also well-defined as $m_\ot$ is a linear monad module:
\[ m_\ot \ot m_\ot \leq m_\ot \quad,\quad\quad m_\ot\leq m_\op\op m_\ot \quad\quad{\rm and}\quad\quad m_\ot\leq m_\ot \op m_\op \]
by the definition of linear \cQ-categories. The quantaloid structure is directly inherited from $(\cQ,\ot,\top)$. Similarly, $(\MonQ^{co},\op,\bott_{(a,m)})$ is a quantaloid.
\end{proof}

Then we get this result:
\begin{thm}
\MonQ\ is a linear quantaloid if and only if \cQ\ is a linear quantaloid.
\end{thm}
\begin{proof}
Suppose \cQ\ is a linear quantaloid, then it is immediate that \MonQ\ is a linear quantaloid as compositions $\ot$ and $\op$ are inherited directly. \\

Suppose \MonQ\ is a linear quantaloid, consider the mapping of each object $a\in\cQ$ to the trivial linear monad $(a,\top_a,\bot_a)$ and each morphism $f\colon a\rarr b$ to itself viewed as the trivial linear monad module $f\colon (a,\top_a,\bot_a)\tov (b,\top_b,\bot_b)$. Then, $\forall f\colon a\rarr b, g\colon b\rarr c, h\colon c\rarr d$, we know $f\ot(g\op h)\leq (f\ot g)\op h$ and $(f\op g)\ot h\leq f\op (g\ot h)$, since the inequalities hold when they are viewed as 1-cells in \MonQ. 
\end{proof}

We now return to the proof of Theorem \ref{QMod_linear}:
\begin{proof}
Suppose \cQ\ is a linear quantaloid, then \MatrQ\ is a linear quantaloid and therefore we can perform the linear monad construction and see that ${\sf Mon}\MatrQ$ is a linear quantaloid. By looking at the definitions, it is immediate that $({\sf Mon}\MatrQ, \ot, \ttop_{(X,\gamma), m)})$ is biequivalent to $(\QMod, \ot, \ttop_M)$ and $({\sf Mon}\MatrQ^{co}, \op, \bott_{(X,\gamma), m)})$ is biequivalent to $(\QMod^{co}, \op, \bott_M)$. Thus \QMod\ and $\QMod^{co}$ are quantaloids. \\

Moreover, given linear \cQ-modules $\Theta\colon M\tov N$, $\Pi\colon N\tov P$ and $\Sigma\colon P\tov R$, 
\[ \Theta\ot(\Pi\op\Sigma)\leq (\Theta\ot\Pi)\op\Sigma \quad{\rm and}\quad (\Theta\op\Pi)\ot\Sigma\leq\Theta\op(\Pi\ot\Sigma)\] since the inequalities hold when they are viewed as linear $(M, N), (N,P)$ and $(P,R)$-monad modules in ${\sf Mon}\MatrQ$. \\

Suppose \QMod\ is a linear quantaloid, then consider the mapping of each object $a\in\cQ$ to the trivial linear \cQ-category $M_a = (1,\rho_a)$ where $1=\{*\}$ is the singleton set, $\rho_a(*)=a$, ${M_a}_\ot = \top_a$ and ${M_a}_\op = \bot_a$, each morphism to $f\colon a\rarr b$ to the trivial linear \cQ-module $\Theta_f\colon M_a\tov M_b$, where $\Theta_f(*,*) = f$. This mapping ensures the linear distributivities hold in \cQ\ as they hold \QMod.
\end{proof}

\begin{rem}
As expected, the constructions are related: let \cQ\ be a linear quantaloid, then $\QMod\cong{\sf Mon}\MatrQ$. Therefore, given any LD-quantale $Q$, we can then consider linear quantaloid ${\sf Mon}(\QRel)$ which is isomorphic to ${\sf Mon}{\sf Matr}\cB(Q)$ and $\cB(Q)$-${\sf Mod}$. 
\end{rem}

\begin{ex} Let $\sREL$ be the Girard quantaloid of sets and relations. The linear monads in \sREL\ are preordered sets $(X,\leq)$ additionally endowed with the relation $\leq^\perp$ defined by 
\[ x\leq^\perp y \iff y\nleq x\]
The linear monad modules are order ideals $R\colon (X,\leq_X)\rarr(Y,\leq_Y)$. In other words, the linear quantaloid of linear monads in \sREL\ is isomorphic to the quantaloid of ordered sets and ideals $\Mon(\sREL)\cong\sORD$. 
\end{ex}

The above example follows from a more general result about Girard quantaloids (and even further cyclic $*$-autonomous categories). Given a linear monad $(a,m_\ot,m_\op)$ in a quantaloid \cQ, the monad maps $m_\ot\colon a\rarr a$ and $m_\op\colon a\rarr a$ are cyclic linear adjoints. Recall that these are unique (up to isomorphism, which is equality in the posetal context) and, as a cyclic $*$-autonomous bicategory, cyclic linear adjoints are canonically given by the $(-)^\perp$. Thus, $m_\op = m_\ot^\perp$. Furthermore, monad modules $f\colon (a,m)\rarr (b,n)$ automatically become linear monad modules $f\colon (a,m,m^\perp)\rarr (b, n_, n^\perp)$.\\

Therefore, the quantaloid of linear monads and linear monad modules of a Girard quantaloid is isomorphic to the quantaloid of monad and monad modules. In other words, considering Girard quantaloids does not give us any truly new quantaloids. 

\begin{ex}
Let \mbox{${\sf P_{max}}$-\sREL} be the linear quantaloid of sets and ``extended'' distance relations. Then consider ${\sf Mon}$(\mbox{${\sf P_{max}}$-\sREL}), a linear quantaloid of sets $X$ endowed with a relation $m_\ot\colon X\times X\rarr [0,\infty]$ satisfying
	\[ m_\ot(x,x)\leq 0 \quad\quad{\rm and}\quad\quad m_\ot(x,x'')\leq {\sf max}(m_\ot(x,x'),m_\ot(x',x'')) \] 
	and relation $m_\op\colon X\times X\rarr [0,\infty]$ satisfying  
	\[ \infty \leq m_\op(x,x) \quad\quad{\rm and}\quad\quad {\sf min}(m_\op(x,x'),m_\op(x',x''))\leq m_\op(x,x'') \] 
	such that 
	\[ {\sf min}(m_\op(x,x'), m_\ot(x',x'')) \leq m_\ot(x,x'')\quad\quad{\rm ,}\quad\quad {\sf min}(m_\ot(x,x'),m_\op(x',x''))\leq m_\ot(x',x'') \] 
	\[ m_\op(x,x'')\leq {\sf max}(m_\ot(x,x'),m_\op(x',x'')) \quad\quad{\rm and}\quad\quad m_\op(x,x'')\leq {\sf max}(m_\op(x,x'),m_\ot(x',x'')) \]
	These are Lawvere ultrametric spaces $(X, m_\ot)$ with an additional distance relation $m_\op$ which interact coherently with $m_\ot$, in particular they are cyclic linear adjoints. The arrows $(X,m)\tov (Y,n)$ are real-valued functions $f\colon X\times Y\rarr[0,\infty]$ such that 
	\[ f(x,y')\leq {\sf max}(f(x,y), n_\ot(y,y') \quad\quad{\rm and}\quad\quad f(x,y)\leq {\sf max}(m_\ot(x,x'), f(x',y))\]
	\[ {\sf min}(f(x,y'), n_\op(y',y))\leq f(x,y) \quad\quad{\rm and}\quad\quad {\sf min}(m_\ot(x,x'), f(x',y))\leq f(x,y) \]
Note that the top two inequalities imply the bottom two and vice versa. We can of course do the same construction with \mbox{${\sf P_{+}}$-\sREL} and \mbox{$[0,1]$-\sREL} and get similar examples of ``linearized'' metric spaces.
\end{ex}

\section{Non-locally posetal examples, $\biloc$, $\biquant$ and $\qtld$}\label{non-posetal}

In this section, we present examples of linear bicategories which are not locally partially ordered. \\

While the bicategory of locales being a linear bicategory will be a consequence of Theorem~\ref{biquant}, we start with case of locales as it is the setting in which these greater results were first investigated. 

\begin{defn}\cite{Joyal_Tierney_1984}\label{Loc}
Let $W, X$ and $Y$ be locales.
\begin{itemize}
	\item An {\em $(X,Y)$-module} $A\colon X\tov Y$ is a sup lattice $A$ which is a left $X$-module and a right $Y$-module satisfying $x(ay)=(xa)y$.
	\item If $B$ is a $(W,Y)$-module, then the sup lattice of right $Y$-module homomorphisms, denoted by $B~\lTo A$, becomes a $(W,X)$-module via $(wf)(a)=wf(a)$ and $(fx)(a)=f(xa)$. Dually, if $C$ is a left $(X,Z)$-module, $A\rTo~C$ is a $(Y,Z)$-module. 
	\item A function $f\colon A\rarr A'$ is a {\em module homomorphism} if it is a left  $X$-module and right $Y$-module homomorphism.
	\item Suppose $A$ is an $(X,Y)$-module. Then the opposite lattice $A^\circ$ becomes a $(Y,X)$-module as follows. Given $x\in X$, the function $x\cdot- \colon A\rarr A$ is sup-preserving map, and hence, has a right adjoint $- /x$. Thus, $A^\circ$ becomes a right $X$-module via $(a,x)\mapsto a/x$ and a left $Y$-module via $(y,a)\mapsto r\backslash a$, where $y\backslash -$ is right adjoint to $-\cdot y$, and $(A^\circ)^\circ\cong A$.
\end{itemize}
\end{defn}

While not explicitly stated, \cite{Joyal_Tierney_1984} develop the required results to say that:

\begin{thm}
There is a bicategory whose objects are locales, 1-cells are modules, and 2-cells are module homomorphisms. Composition of $A\colon X\tov Y$ and $B\colon Y\tov Z$ is given by
\[ A\ot B=A\otimes_Y B\cong (B^\circ \lTo A)^\circ\cong (B\rTo\, A^\circ)^\circ\] 
with identity 1-cells $X\colon X\tov X$. We denote this bicategory by $\biloc$.
\end{thm}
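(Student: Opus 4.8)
The plan is to identify $\biloc$ with the bicategory of monoids and bimodules internal to the symmetric monoidal closed category $\mathsf{Sup}$ of sup-lattices and sup-preserving maps --- the sup-lattice analogue of the bicategory of rings and bimodules --- and to derive all of the bicategory structure from the universal property of the relative tensor product, using the self-duality of $\mathsf{Sup}$ for the alternative ${\rm Mod}$-descriptions. First I would record the facts about $\mathsf{Sup}$ that drive everything: it is complete and cocomplete and symmetric monoidal closed, with unit the two-element lattice $2$; the tensor $A\otimes B$ is characterized by the universal property that sup-preserving maps $A\otimes B\to C$ are the same as maps $A\times B\to C$ preserving suprema in each variable separately; the internal hom $\mathsf{Sup}(A,B)$ is the sup-lattice of sup-preserving maps under the pointwise order; and $\mathsf{Sup}(A,2)\cong A^\circ$, so that $(-)^\circ$ is an equivalence $\mathsf{Sup}\simeq\mathsf{Sup}^{op}$ with $(A^\circ)^\circ\cong A$. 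A locale $X$ is a commutative monoid in $\mathsf{Sup}$ (multiplication $\wedge$, unit $\top$), an $(X,Y)$-bimodule is precisely a module over the monoid $X\otimes Y$ --- the condition $x(ay)=(xa)y$ being exactly the statement that the left $X$- and right $Y$-actions combine into an $X\otimes Y$-action --- and a bimodule homomorphism is a morphism of such modules.

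Next I would build the composite. Given $A\colon X\tov Y$ and $B\colon Y\tov Z$, define $A\otimes_Y B$ to be the coequalizer in $\mathsf{Sup}$ of the pair $A\otimes Y\otimes B\rightrightarrows A\otimes B$ given by the right $Y$-action on $A$ and the left $Y$-action on $B$; this exists since $\mathsf{Sup}$ is cocomplete. Because $-\otimes C$ preserves colimits for each $C$ (the monoidal structure is closed), the residual left $X$- and right $Z$-actions descend to this coequalizer, making $A\otimes_Y B$ an $(X,Z)$-bimodule and equipping it with the universal property: bimodule homomorphisms $A\otimes_Y B\to C$ into an $(X,Z)$-bimodule $C$ correspond to maps $A\times B\to C$ that preserve suprema separately, are $X$-linear in the first variable, $Z$-linear in the second, and satisfy $ay\otimes b=a\otimes yb$. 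From this the bicategory structure is forced. The associator: both $(A\otimes_Y B)\otimes_Z C$ and $A\otimes_Y(B\otimes_Z C)$ represent the functor of $Y$- and $Z$-balanced separately-sup-preserving trilinear maps out of $A\times B\times C$, so there is a unique natural isomorphism between them. The unitors: for the identity $1$-cell $X\colon X\tov X$, the action $X\otimes_X A\to A$ is an isomorphism, and likewise $A\otimes_Y Y\cong A$. The pentagon and triangle axioms then hold automatically, each being an identity between two canonically induced maps of representing objects of one and the same functor.

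Finally, for the ${\rm Mod}$-formulas I would invoke the self-duality. For $B\colon Y\tov Z$, the opposite lattice $B^\circ$ is a $(Z,Y)$-bimodule as in the preceding definition, hence in particular a right $Y$-module, as is $A$. For any sup-lattice $C$, a $Y$-balanced separately-sup-preserving map $A\times B\to C$ is the same as a right-$Y$-module homomorphism $A\to\mathsf{Sup}(B,C)$ landing in the sub-sup-lattice ${\rm Mod}Y(B,C)$ singled out by the balancing; taking $C=2$ and using $\mathsf{Sup}(B,2)\cong B^\circ$ identifies $(A\otimes_Y B)^\circ$ with ${\rm Mod}Y(A,B^\circ)$, whence $A\otimes_Y B\cong{\rm Mod}Y(A,B^\circ)^\circ$, and applying $(-)^\circ$ together with the symmetry $(A\otimes_Y B)^\circ\cong B^\circ\otimes_Y A^\circ$ (or running the dual argument) gives $A\otimes_Y B\cong Y{\rm Mod}(B,A^\circ)^\circ$. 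The main obstacle I anticipate is the careful bookkeeping here and in the construction of the composite: verifying once and for all that $A\otimes_Y B$ genuinely carries the stated $(X,Z)$-bimodule structure, and tracking which locale acts on which side through each application of $(-)^\circ$ and $\mathsf{Sup}(-,-)$. By contrast the coherence axioms cost nothing, since the representability arguments deliver them for free.
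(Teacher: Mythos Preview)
Your proposal is correct. Note, however, that the paper does not supply its own proof of this statement: it is quoted as a result of Joyal--Tierney and simply cited. What the paper \emph{does} do, in the Appendix, is set up exactly the framework you describe---biclosed bicategories $\bimod(\cal B)$ of monads and bimodules in a biclosed bicategory with stable local (co)equalizers---and then observe (Example~\ref{biclosed_mon}) that taking ${\cal V}=\mathsf{Sup}$ recovers $\biquant$ and its full sub-bicategory $\biloc$. So your strategy of realizing $\biloc$ as bimodules over commutative monoids in $\mathsf{Sup}$, with composition given by the coequalizer defining the relative tensor, is precisely the paper's implicit viewpoint; you have simply unpacked it explicitly rather than invoking the general machinery. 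Your derivation of the ${\rm Mod}$-descriptions via the self-duality $\mathsf{Sup}(A,2)\cong A^\circ$ is also the intended one, and is used later in the paper when $\biloc$ is shown to be a Girard bicategory.
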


Now, we note that $\biloc$ admits another composition of 1-cells 
\[A\op B=(B^\circ \ot A^\circ)^\circ = (B^\circ\otimes_Y A^\circ)^\circ\cong  A^\circ\rTo\, B\]
with identity 1-cells $X^\circ\colon X\tov X$,  since $X^\circ\op A  \cong A\cong A\op Y^\circ$ and which is associative,
 
\[A\op (B\op C) =  ((C^\circ\ot B^\circ)\otimes  A^\circ)^\circ \cong (C^\circ\ot (B^\circ\ot  A^\circ))^\circ= (A\op B)\op C \]
for all $C\colon Z\tov W$. To see that $\biloc$ is a linear bicategory, we will define the linear distributivity 
\[A\ot (B\op C)\rarr (A\ot B)\op C\]

Since $A\ot (B\op C)\cong A\otimes_Y (B^\circ\rTo\, C)$ and $(A\ot B)\op C \cong (B^\circ\lTo A)\rTo\, C$, it suffices to define a 2-cell $A\ot_Y (B^\circ\rTo\, C) \rarr (B^\circ\lTo A)\rTo\, C$ or equivalently,  
\[ (B^\circ\lTo A) \ot_X (A \ot_Y (B^\circ\rTo\, C)) \rarr C \]
For this, we can use the evaluation maps \[(B^\circ\lTo A) \ot_X (A \ot_Y (B^\circ\rTo\, C)) \cong ((B^\circ\lTo A) \ot_X A) \ot_Y (B^\circ\rTo\, C) \xrightarrow{\epsilon_{A,Y}} B^\circ\ot_Y (B^\circ\rTo\, C) \xrightarrow{\epsilon_{Z,B^\circ}} C\]
Similarly, we get a 2-cell $(A\op B)\ot C\rarr A\op (B\ot C)$, as desired.

\begin{thm}
Under the above operations, $\biloc$ is a linear bicategory.
\end{thm}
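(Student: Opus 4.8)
The plan is to verify that the data already exhibited---the two composition operations $\ox$ and $\parr$ on 1-cells of $\biloc$, the two families of identity 1-cells $X$ and $X^\circ$, and the two linear distribution 2-cells constructed via evaluation maps---assemble into a linear bicategory in the sense of \cite{CKS}. Since the bicategory structure for $(\biloc,\ox)$ is the Joyal--Tierney theorem quoted above, and $(\biloc,\parr)$ is obtained from it by applying $(-)^\circ$ throughout (so its coherence follows formally from that of $(\biloc,\ox)$ together with $(A^\circ)^\circ \cong A$ and the fact that $(-)^\circ$ reverses composition: $(A\ox B)^\circ \cong B^\circ \ox A^\circ$), the work is entirely in checking the linear-bicategory axioms relating the two structures: naturality of the two distributions in all three 1-cell arguments, and the coherence diagrams of \cite{CKS} (the pentagon-like ``associativity'' coherences for $\ox(\parr)$ and $(\parr)\ox$, the compatibility of the distributions with the unitors, and the ``triangle'' linking $\top$ and $\perp$).

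First I would make the two distributions fully explicit as natural transformations. Using the isomorphisms $A\ox(B\parr C)\cong A\otimes_Y Z{\rm Mod}(B^\circ,C)$ and $(A\ox B)\parr C\cong Z{\rm Mod}({\rm Mod}Y(A,B^\circ),C)$ from the excerpt, the first distribution is the transpose of the composite of evaluation maps
\[
\big({\rm Mod}Y(A,B^\circ)\otimes_X A\big)\otimes_Y Z{\rm Mod}(B^\circ,C)\;\to\; B^\circ\otimes_Y Z{\rm Mod}(B^\circ,C)\;\to\; C,
\]
and the second is its mirror image. Naturality in $A$, $B$, $C$ then reduces to naturality of evaluation/coevaluation for the relevant internal-hom adjunctions in the symmetric monoidal bicategory of sup-lattices, which is standard; I would state this as a lemma and give the transposition argument rather than chase every square. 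The unit coherences ($A\ox(\perp_Y\parr C)\Rightarrow(A\ox\perp_Y)\parr C$ collapsing to an identity-type isomorphism, and similarly on the other side, using $X^\circ\parr A\cong A\cong A\parr Y^\circ$) follow because the evaluation map against $B=Y^\circ$ (so $B^\circ = Y$, the $\ox$-unit) is an isomorphism.

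The main obstacle will be the two big coherence pentagons, e.g.
\[
A\ox\big(B\parr(C\parr D)\big)\;\Rightarrow\;(A\ox B)\parr(C\parr D)\;\Rightarrow\;\big((A\ox B)\parr C\big)\parr D
\]
versus the route through $A\ox\big((B\parr C)\parr D\big)$, and the mixed coherence involving one $\ox$-associator and one distribution. My strategy here is \emph{not} to verify these by hand in $\biloc$ but to invoke the fact that all of the structure is defined purely from the closed symmetric monoidal bicategory structure on sup-lattices (tensor $\otimes$, internal homs ${\rm Mod}Y(-,-)$ and $Y{\rm Mod}(-,-)$, and the dualizing involution $(-)^\circ$ which is an equivalence $A\mapsto A^\circ$ with $A^\circ\cong A\multimap \perp$ in the appropriate sense): a $*$-autonomous-type structure automatically yields a linearly distributive one with all coherences, and the bicategorical analogue---a ``Girard bicategory,'' as the paper calls it---likewise automatically yields a linear bicategory. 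Concretely I would observe that $\biloc$ is biclosed with cyclic dualizing 1-cells $\{X^\circ\}$ (each $X^\circ\cong {\rm Mod}Y(X, \perp)$-style dual, cyclic because ${\rm Mod}Y(A,B^\circ)^\circ$ and $Y{\rm Mod}(B,A^\circ)^\circ$ agree), hence a Girard bicategory, and then cite the general fact (Section 3.4 of \cite{CKS}, developed later in this paper) that every Girard bicategory is a linear bicategory---thereby deriving all the coherence for free and reducing the theorem to the bookkeeping of identifying $\biloc$'s $\parr$, units, and distributions with the ones produced by that general construction. If one instead wants a self-contained proof, the pentagons reduce, after transposing, to the statement that the two ways of evaluating a triple internal-hom expression agree, which is the usual coherence of evaluation in a monoidal closed setting; I would record this reduction and defer the fully general version to Theorem~\ref{Girard}.
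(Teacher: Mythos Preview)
Your proposal is correct and follows essentially the same approach as the paper: the paper gives no self-contained proof of this theorem but instead remarks at the start of Section~\ref{loc} that it is a special case of Theorem~\ref{Girard}, then shows $\biloc$ is a Girard bicategory (via Proposition~\ref{Girard_Mon} applied to ${\cal V}={\rm Sup}$) and invokes the general result that every Girard bicategory is linear. Your additional outline of a direct verification via naturality of evaluation and coherence-reduction is more detailed than anything the paper supplies, but your primary strategy---defer to Theorem~\ref{Girard}---is exactly what the paper does.
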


\subsection{\cV-{\sf Mat}, \MonB\ and \cV-$\prof$}

Recall $\vmat$, the bicategory of sets, \cV-matrices and \cV-matrix morphisms, which is biclosed when \cV\ is a symmetric monoidal closed category with set-indexed products and coproduct by Lemma \ref{VMat_biclosed}. If \cV\ is further $*$-autonomous, $\vmat$ becomes a linear bicategory, as stated by \cite{Cockett_Koslowski_Seely_2000}:

\begin{prop}\label{VMat_starauto}  
Consider $\vmat$, where \cV\ is a $*$-autonomous category with set-indexed products and coproducts. Given a set $X$, define $\bott_X\colon X\tov X$ by \[\bott_X(x,x')= \begin{cases} \bot & x=x' \\ \top & \text{otherwise}\\ \end{cases}\]
For \cV-matrix $A\colon X\tov Y$, defining $A^\perp\colon Y\tov X$ by $A^\perp(y,x)=A(x,y)^\perp = A(x,y)\rTo\,\bot$, one can show that $(A\rTo\,\bott_Y) \cong A^\perp\cong (\bott_X\,\lTo A)$ and $A\cong (A^\perp)^\perp$,  and so $\vmat$ is a cyclic $*$-autonomous bicategory with cyclic dualizing family $\mathcal{ D}=\{\bott_X\colon X\tov X \}$.
\end{prop}

Now consider \MonB, the bicategory of monads in \cB, modules and module morphisms, which is biclosed if \cB\ is biclosed with local equalizers and coequalizers stable under composition by Lemma \ref{MonB_biclosed}. Then \MonB\ can be a linear bicategory:

\begin{prop}\label{MonB_starauto} 
Suppose \cB\ is a cyclic $*$-autonomous bicategory with local equalizers and coequalizers stable under composition, then \MonB\ is a cyclic $*$-autonomous bicategory.
\end{prop}
\begin{proof} 
Suppose $\cD=\{{\bot_X}\colon X\rarr X \, \vert \, X \in \cB\}$  is a cyclic dualizing family for \cB. Define the family of modules 
\[ \cD^\perp = \{ Q^\perp \colon (X,Q)\tov (X,Q)\,\vert\, (X,Q)\in\MonB \} \]
This family is well-defined since the 1-cell $Q^\perp = Q\rTo\,\bot_X\colon X\rarr X$ is a module $(X,Q)\tov (X,Q)$ with action $\lambda\colon Q\ot Q^\perp\rarr Q^\perp$ defined by the transpose of \[ Q\ot (Q\ot Q^\perp)\xrightarrow{\sim} (Q\ot Q)\ot Q^\perp \xrightarrow{m\ot Q^\perp} Q\ot Q^\perp \xrightarrow{\epsilon_{X,Q}} \bot_X \]
and action $\rho$ defined as $Q^\perp\ot Q\xrightarrow{\sim} (\bot_X\,\lTo Q)\ot Q \xrightarrow{\rho'} \bot_X\,\lTo Q \xrightarrow{\sim} Q^\perp$, where $\rho'$ is the transpose of \[ ((\bot_X\,\lTo Q)\ot Q)\ot Q \xrightarrow{\sim} (\bot_X\,\lTo Q)\ot (Q\ot Q) \xrightarrow{(\bot_X\,\lTo Q) \ot m} (\bot_X\,\lTo Q)\ot Q \xrightarrow{\epsilon_{Q,X}} \bot_X \] 

Given a monad module $A\colon (X,Q)\tov (Y,R)$, applying Proposition~\ref{natural}, we see that \[A\rTo_Q\, Q^\perp = A\rTo_Q (Q\rTo \bot_X) = A\rTo_Q (Q\rTo_{\top_X} \bot_X)\cong A\rTo_{\top_X} \bot_X = A\rTo\,\bot_X = A^\perp \] and \[ R^\perp \lTo_R A \cong (\bot_Y\,\lTo R)\lTo_R A =(\bot_Y\,\lTo_{\top_Y} R)\lTo_R A\cong \bot_Y\,\lTo_{\top_Y} A = \bot_Y\,\lTo A \cong A^\perp \]
Since all these 2-cells are natural in $A$, $\cD^\perp$ is a cyclic dualizing family for \MonB.
\end{proof}

Finally, recalling that \cV-$\prof$ can be constructed as ${\sf Mon}(\vmat)$ by Proposition \ref{BMod_as_MonMatrB}, and by the above two results:
\begin{prop}\label{VProf_starauto}
If \cV\ is a complete and cocomplete $*$-autonomous category, then \cV-$\prof$ is a cyclic $*$-autonomous bicategory.
\end{prop}

\subsection{Linear bicategories $\biquant$ and $\qtld$}

\cite{Joyal_Tierney_1984} observed that for a fixed commutative unital quantale $Q$, the category of right $Q$-modules and module homomorphisms is a $*$-autonomous category. \cite{Rosenthal_1994} expanded this result to show that the category of $(Q,Q)$-modules is equally $*$-autonomous and that, given a small quantaloid \cQ, the category of $(\cQ,\cQ)$-modules is $*$-autonomous.\\

Now that we have access to the notion of linear bicategories, $Q$ and \cQ\ no longer need to be fixed and we can claim the following examples of cyclic $*$-autonomous bicategories.

\begin{defn}
\begin{itemize}
	\item Given unital quantales $Q$ and $R$, a $(Q,R)$-module $Q\tov R$ is a suplattice $A$ which is a left $Q$-module and a right $R$-module, i.e., there are suplattice homomorphisms $\star\colon Q\times A\rarr A$ and $\cdot\colon A\times R\rarr A$ such that 
	\[ (q \ot q')\star a = q\star(q'\star a) \quad, \quad a\cdot(r\ot r') = (a\cdot r)\cdot r'  \]
	 \[ \top \star a = a\quad,\quad a\cdot \top = a \quad{\rm and}\quad (q \star a)\cdot r = q\star(a\cdot r) \]

	\item Given $(Q,R)$-modules $A$ and $B$, a module homomorphism is a suplattice homomorphism $f\colon A\rarr B$ such that
	\[f(q\star a) = q\star f(a) \quad\quad{\rm and}\quad\quad f(a\cdot r) = f(a)\cdot r \]
	
	\item Let $\biquant$ denote the bicategory of unital quantales, modules and module morphisms. Then, $\biquant$ is ${\sf Mon}\mathcal{ B}({\rm Sup})$, the bicategory of monoids, monoid modules and module morphisms in ${\rm Sup}$. 
\end{itemize}

\end{defn}

If \cV\ is $*$-autonomous category with equalizers and coequalizers, its suspension $\cB(V)$ is a cyclic $*$-autonomous bicategory with local equalizers and co-equalizers stable under composition. Then, by Proposition \ref{MonB_starauto}, the bicategory ${\sf Mon}(\cB(V))$ is a cyclic $*$-autonomous bicategory. Taking $\cV={\rm Sup}$, we get:

\begin{thm}\label{biquant}
The bicategory $\biquant$ is a cyclic $*$-autonomous bicategory.
\end{thm}

\begin{defn}
\begin{itemize}
	\item Given small quantaloids $\cQ$ and $\mathcal{ R}$, a $(\cQ,\mathcal{ R})$-module $A\colon \cQ\tov\mathcal{ R}$ consists of, for each $q, q'\in{\rm ob}\cQ, r, r'\in{\rm ob}\mathcal{ R}$:
	\begin{itemize}
		\item a suplattice $A(q,r)$, 
		\item a left action suplattice homomorphism $\star\colon \cQ(q,q')\times A(q',r)\rarr A(q,r)$ such that given $a\in A(q,r)$
		 \[ \top_q \star a = a \quad {\rm for}\quad q=q' \quad\quad (f\ot g)\star a = f\star (g\star a) \quad {\rm for}\quad f\colon q\rarr q', g\colon q'\rarr q'' \]
		\item a right action suplattice homomorphism $\cdot\colon A(q,r)\times \mathcal{ R}(r,r')\rarr A(q,r')$ such that given $a\in A(q,r)$
		\[  a\cdot\top_r = a \quad {\rm for}\quad r=r' \quad\quad a\cdot(h\ot k) = (a\cdot h)\cdot k\quad {\rm for}\quad h\colon r\rarr r', k\colon r'\rarr r'' \]
	\end{itemize}
	satisfying $\forall a\in A(q,r), f\colon q\rarr q' \in \cQ, h\colon r\rarr r' \in \mathcal{ R}$, \[ (f\star a)\cdot h = f\star (a\cdot h) \]
	
	\item Given $(\cQ,\mathcal{ R})$-modules $A$ and $B$, a module homomorphism $f\colon A\rarr B$ is a family of suplattice homomorphisms $f_{q,r}\colon A(q,r)\rarr B(q,r)$ satisfying, for $f\colon q\rarr q' \in\cQ, a'\in A(q',r), a\in A(q,r), h\colon r\rarr r' \in \mathcal{ R}$,
	\[f_{q,r}(f\star a') = q\star f_{q',r}(a') \quad\quad{\rm and}\quad\quad f_{q,r'}(a\cdot h) = f_{q,r}(a)\cdot h \]
	
	\item Let $\qtld$ denote the bicategory of small quantaloids, modules and module homomorphisms. Then, $\qtld$ is ${\rm Sup}$-$\prof$, the bicategory of ${\rm Sup}$-categories, ${\rm Sup}$-profunctors and ${\rm Sup}$-transformations.
\end{itemize}
\end{defn}

By Proposition \ref{VProf_starauto} and taking $\cV={\rm Sup}$, we get:
\begin{thm}
The bicategory $\qtld$ is a cyclic $*$-autonomous bicategory.
\end{thm}

\textbf{Acknowledgments.} The authors would like to thank an attentive anonymous referee for making many suggestions which improved the article, in particular bringing our attention to the notion of bimonoids in the algebraic logic literature, and to the article \cite{Galatos_Prenosil_2023}, and for calling attention to bi-Heyting algebras and the opposite infinitary law when considering locales. The first and second authors also acknowledge the support of the Natural Sciences and Engineering Research Council of Canada (NSERC), under the grant awarded to Richard Blute.

\bibliographystyle{plain} 
\bibliography{bibliography}

\end{document}